\documentclass[reqno, 12pt]{amsart}

\usepackage[colorlinks, linkcolor=blue, citecolor=red, urlcolor=cyan,pagebackref]{hyperref}

\allowdisplaybreaks

\usepackage{etex}
\usepackage{amsthm,amssymb,amscd,amsmath}
\usepackage{mathrsfs}
\usepackage{bbm}
\usepackage{graphicx}
\usepackage{epic,eepic}
\usepackage{tikz-cd}%for diagrams
\usepackage{stmaryrd}%for sslash
\usepackage{mathtools}
\usepackage{stmaryrd}%for llbracket

\usepackage{ascmac}%for screen environment
\usepackage[normalem]{ulem}

\usepackage{accents}

\usepackage{here}
\usepackage{bm}
\usepackage[all]{xy}

\usepackage{xcolor} 
\definecolor{mygreen}{rgb}{0,0.7,0.3}
\definecolor{myblue}{rgb}{0,0.50,1.20}%cyan={0,1.74,2.39}
\definecolor{myorange}{rgb}{1,0.5,0.1}

\definecolor{fillred}{rgb}{1,0.9,0.9}
\definecolor{fillgreen}{rgb}{0.9,1,0.9}

 \definecolor{refkey}{rgb}{0,0.7,0.3}
 \definecolor{labelkey}{rgb}{1,0,0}

\usepackage{tikz}
\usetikzlibrary{calc,decorations.markings}
\usetikzlibrary{arrows.meta}
\usetikzlibrary{positioning}
\usetikzlibrary{cd, intersections, calc, decorations.pathmorphing, arrows, decorations.pathreplacing}

\numberwithin{equation}{section}

\usepackage{version}

\usepackage{cleveref}
\crefname{thm}{Theorem}{Theorems}
\crefname{cor}{Corollary}{Corollaries}
\crefname{lem}{Lemma}{Lemmas}
\crefname{sublem}{Sublemma}{Sublemmas}
\crefname{prop}{Proposition}{Propositions}
\crefname{dfn}{Definition}{Definitions}
\crefname{defi}{Definition}{Definitions}
\crefname{ex}{Example}{Examples}
\crefname{claim}{Claim}{Claims}
\crefname{conj}{Conjecture}{Conjectures}
\crefname{conv}{Notation}{Notations}
\crefname{rem}{Remark}{Remarks}
\crefname{rmk}{Remark}{Remarks}
\crefname{prob}{Problem}{Problems}
\crefname{figure}{Figure}{Figures}
\crefname{section}{Section}{Sections}
\crefname{subsection}{Section}{Sections}
\crefname{appendix}{Appendix}{Appendices}
\crefname{introthm}{Theorem}{Theorems}
\crefname{introprop}{Proposition}{Propositions}
\crefname{introcor}{Corollary}{Corollaries}
\crefname{introconj}{Conjecture}{Conjectures}

\newtheorem{thm}{Theorem}[section]
\newtheorem{prop}[thm]{Proposition}
\newtheorem{cor}[thm]{Corollary}
\newtheorem{lem}[thm]{Lemma}

\newtheorem{introthm}{Theorem}
\newtheorem{introprop}[introthm]{Proposition}

\theoremstyle{definition}
\newtheorem{dfn}[thm]{Definition}

\theoremstyle{remark}

\newtheorem{rem}[thm]{Remark}

\newcommand{\bZ}{\mathbb{Z}}

\newcommand{\bR}{\mathbb{R}}
\newcommand{\bC}{\mathbb{C}}

\newcommand{\bp}{{\mathbf{p}}}
\newcommand{\bbp}{{\boldsymbol{p}}}
\newcommand{\br}{{\mathbf{r}}}

\newcommand{\cF}{\mathcal{F}}

\newcommand{\X}{\mathcal{X}}%--------------shortened

\newcommand{\ve}{\varepsilon}
\newcommand{\dbra}[1]{[ #1 ]}

\newcommand{\bD}{{\accentset{\mbox{\bfseries .}}{\tri} }}

\newcommand{\inv}{\mathrm{inv}}

\DeclareMathOperator{\Log}{\mathrm{Log}}

\newcommand{\tri}{\triangle}
\newcommand{\sgn}{\mathrm{sgn}}

\newcommand{\pos}{\mathbb{R}_{>0}}

\newcommand{\Teich}{Teichm\"uller}
\newcommand{\interior}{{\mathrm{int}}}

\newcommand{\op}{\mathrm{op}}

\newcommand{\opU}{\mathsf{U}}

\newcommand{\opP}{\mathsf{P}}

\newcommand{\opX}{\mathsf{X}}

\newcommand{\opA}{\mathsf{A}}
\newcommand{\opT}{\mathsf{T}}
\newcommand{\opS}{\mathsf{S}}

\newcommand{\bmu}{\boldsymbol{\mu}}

\newcommand{\ket}[1]{| #1 \rangle}

\newcommand{\mb}[1]{\underline{#1}}

\newcommand{\tdot}{\scalebox{0.4}{$\bullet$}}

\makeatletter
\newcommand{\hatt}[1]{\mathpalette\hatt@{#1}}
\newcommand{\hatt@}[2]{%
  \ooalign{%
    $\m@th#1#2$\cr
    \hidewidth
    \hfil
    \scalebox{1.5}[1]{%
      $\m@th#1\hat{\phantom{#2}}$%
    }%
    \hfil
    \hidewidth\cr
  }%
}
\makeatother

\makeatletter
\newcommand{\checkk}[1]{\mathpalette\checkk@{#1}}
\newcommand{\checkk@}[2]{%
  \ooalign{%
    $\m@th#1#2$\cr
    \hidewidth
    \hfil
    \scalebox{1.15}[1]{%
      $\m@th#1\check{\phantom{#2}}$%
    }%
    \hfil
    \hidewidth\cr
  }%
}
\makeatother

\allowdisplaybreaks

\makeatletter
\newcommand{\oset}[3][0ex]{%
  \mathrel{\mathop{#3}\limits^{
    \vbox to#1{\kern-2\ex@
    \hbox{$\scriptstyle#2$}\vss}}}}
\makeatother

\makeatletter
\newcommand{\osetnear}[3][0ex]{%
  \mathrel{\mathop{#3}\limits^{
    \vbox to#1{\kern-.3\ex@
    \hbox{$\scriptstyle#2$}\vss}}}}
\makeatother

\tikzset{pics/.cd,
handle/.style={code={
\draw (-0.72,0) to[bend left] (0.72,0);
\draw (-0.9,0.1) to[bend right] (0.9,0.1);
}}}
 
\tikzset{
  mid arrow/.style={postaction={decorate,decoration={
        markings,
        mark=at position .5 with {\arrow[#1]{stealth}}
      }}},
}

\tikzset{->-/.style 2 args={
	postaction={decorate},
	decoration={markings, mark=at position #1 with {\arrow[thick, #2]{>}}} 
    },
    ->-/.default={0.5}{}
}%refer arrow tips
\tikzset{-<-/.style 2 args={
	postaction={decorate},
	decoration={markings, mark=at position #1 with {\arrow[thick, #2]{<}}} 
    },
    -<-/.default={0.5}{}
}

\tikzset{
    partial ellipse/.style args={#1:#2:#3}{
        insert path={+ (#1:#3) arc (#1:#2:#3)}
    }
}

\pgfdeclarelayer{bg}    % declare background layer
\pgfsetlayers{bg,main}  % set the order of the layers (main is the standard layer)

\title[Phases in the cyclic quantum dilogarithm]{Phases in the cyclic quantum dilogarithm: correction to ``Cyclic quantum Teichm\"uller theory"}

\author[Tsukasa Ishibashi]{Tsukasa Ishibashi}
\address{Tsukasa Ishibashi, Mathematical Institute, Tohoku University, 
6-3 Aoba, Aramaki, Aoba-ku, Sendai, Miyagi 980-8578, Japan.}
\email{tsukasa.ishibashi.a6@tohoku.ac.jp}

\date{\today}

\begin{document}

\begin{abstract}
We first correct $\bmu_N$-phase issues in the basic definition around the cyclic quantum dilogarithm operators in \cite{Ish}. In particular, the notion of global coefficients must be extended by incorporating additional lifting data forming a $\bmu_N$-cover, and the pentagon relation generally acquires an extra $\bmu_N$-factor. Nevertheless, we prove that every point in a regular locus of the partially compactified cluster Poisson variety lifts to a \emph{strict} extended global coefficient, for which this extra $\bmu_N$-factor is trivial. Consequently, the original statements of \cite{Ish} hold for such lifts and are therefore restored.
\end{abstract}

\maketitle

\setcounter{tocdepth}{1} %depth of table of contents
\tableofcontents

\section{Introduction}\label{sec:intro}
Quantization of the \Teich\ theory can be understood in view of a (projective) representation theory of the (dotted) Ptolemy groupoid of marked surfaces. %Infinite-dimensional projective representations are constructed by Kashaev \cite{Kas98,Kas99,Kas00,Kas01} and Chekhov--Fock \cite{CF99} using the Faddeev's (non-compact) quantum dilogarithm when the quantum parameter $q=e^{i\pi \hbar}$ is generic ($\hbar >0$). 
When the quantum parameter $q=e^{i\pi \hbar}$ is a root of unity, finite-dimensional projective representations are constructed in the previous work \cite{Ish}, which in particular produces trace invariants of mapping classes. 
%Its intimate connections to earlier works of Bonahon--Liu \cite{BL}, Bai--Bonahon--Liu \cite{BBL} and Baseilhac--Benedetti \cite{BB} are also clarified. In contrast to the infinite-dimensional case \cite{Kas98,Kas99,CF99,Kas00,Kas01}, t
The basic ingredient is the cyclic quantum dilogarithm \cite{FK}. 
%See \cite{Ish} for more details.

In this short note, we give some corrections to the construction of the finite-dimensional projective representation of \cite{Ish} regarding the basic definition around the cyclic quantum dilogarithm operator. As a result, we will have an extra phase factor in the pentagon relation in general. On the other hand, we will prove that the original statements hold true for a certain choice of global coefficient, which exists over any generic point of the cluster Poisson variety. 
We remark that this problem does not affect the trace invariants, since they are already defined up to $U(1)$-phases.

\subsection{Problems to be fixed}
The problem concerns the branch-dependent
definition of the cyclic quantum dilogarithm $\Psi_\bp(X)$. Let $N \geq 3$ be an odd integer,  $q^2$ be a primitive $N$-th root of unity, and choose its square-root
$q$ so that $q^N=1$. We will use the notation
\begin{align*}
    \bmu_N:=\{q^{2k} \mid k \in \bZ_N\},
\end{align*}
which is viewed as a subgroup of $\bC^\ast$. 

Following \cite[Section 2.1]{Ish}, consider
\[
 \cF_N=\{\bp=(p^+,p^-)\in(\bC^*)^2
 \mid (p^+)^N+(p^-)^N=1 \}.
\]
For $\bp \in \cF_N$, let $y:=p^+/p^- \in \bC^\ast$.
The original definition was
\begin{equation}\label{eq:product-definition}
 \Psi_{\boldsymbol p}(X)
 :=\prod_{j=0}^{N-1}
   \bigl(p^-+q^{2j+1}p^+X\bigr)^{j/N},
\end{equation} 
where we need to choose a branch of the $N$-th root.

\subsubsection{Two minor corrections}
First, the projection $\bp\mapsto y$ is a $\bZ_N$-covering
over
\[
 B_N:=\bC^*\setminus\{y\mid y^N=-1\},
\]
not over all of $\bC^*$. Let $\pi: \cF_N \to B_N$ be the corrected $\bZ_N$-covering. The monodromy around each point $y \in \bC^\ast \setminus B_N$ generates $\bmu_N$. 
Second, the radial cuts must include their branch points.  Thus the natural
domain is
\begin{equation}\label{eq:domain}
 D_N(y)=\bC\setminus\bigcup_{a=0}^{N-1}
 \left\{X\mathrel{\Big|}|yX|\geq 1,
 \ \arg(yX)=\frac{\pi(2a+1)}{N}\right\},
\end{equation}
instead of $|yX|>1$. 

Then the function $\Psi_{\boldsymbol p}(X)$ is first defined as a convergent power series on the domain $|yX|<1$, and then analytically continued to the corrected domain $D_N(y)$. Then $\Psi_\bp:D_N(y) \to \bC$ is well-defined as a holomorphic function. This construction handles the dependence on $X$ correctly, but our point will be its $\bp$-dependence.
%The cuts in \eqref{eq:domain} are permuted by$X\mapsto q^2X$.

\subsubsection{The branch anomaly}\label{subsub:branch_anomaly}
We not that the difference equation \cite[Lemma 2.5]{Ish}
\begin{equation}\label{eq:claimed-difference}
 \Psi_{\bp}(q^2X)
 =(p^-+qp^+X)\Psi_{\bp}(X),
 \qquad X^N=1,
\end{equation}
cannot hold for every lift $\bp\in\cF_N$ if
\eqref{eq:product-definition} is interpreted as a single-valued holomorphic
function on the connected domain $D_N(y)$. Note that this difference relation is fundamental in the connection to the quantum cluster algebra.

Indeed, consider two points $\bp=(p^+,p^-)$ and
$\bp'=(q^2 p^+,q^2 p^-)$ over a common point $y \in B_N$. In particular, they share the domain $D_N(y)$ of definition. 
For any choices of branches in
\eqref{eq:product-definition}, we have
\[
 \Psi_{\bp'}(X)^N
 =q^{2\sum_{j=0}^{N-1}j}\Psi_{\bp}(X)^N
 =q^{N(N-1)}\Psi_{\bp}(X)^N
 =\Psi_{\bp}(X)^N.
\]
Consequently, $\Psi_{\bp'}/\Psi_{\bp}$ is a
holomorphic $\bmu_N$-valued function on the connected domain $D_N(y)$, and hence is
constant in $X$. Therefore
\[
 \frac{\Psi_{\bp'}(q^2X)}
      {\Psi_{\bp'}(X)}
 =
 \frac{\Psi_{\bp}(q^2X)}
      {\Psi_{\bp}(X)}.
\]
On the other hand, by applying \eqref{eq:claimed-difference}, the left-hand side produces $q^2(p^-+qp^+X)$ while the right-hand
side is $p^-+qp^+X$. This is a contradiction for generic $X$.

Equivalently, the assertion that
\[
 \prod_{j=1}^{N}
 \bigl(p^-+q^{2j+1}p^+X\bigr)^{-1/N}=1
\]
in the proof of \cite[Lemma 2.5]{Ish} is valid only on a suitably normalized sheet.  Evaluation at the boundary point 
$(p^+,p^-)=(0,1) \in \overline{\cF}_N \setminus \cF_N$ fixes the phase on the sheet attached to that point, not on
all sheets of the Fermat cover. A similar evaluation is used in the proof of the pentagon relation \cite[Section 9.1]{Ish}.

% If a section $s(y)$ is fixed and
% $\bp=\lambda(\bp)s(y)$ with
% $\lambda(\bp)\in\bmu_N$, the corresponding corrected formula is
% \[
%  \Psi_{\bp}(q^2X)
%  =\lambda(\bp)^{-1}(p^-+qp^+X)
%   \Psi_{\bp}(X).
% \]

\subsection{A solution: branch-free spectral definition}\label{subsec:strategy}
For the functional calculus, we only need the values on the spectrum $\bmu_N$. 
Recall
\begin{align*}
    w(\bp | k)
 :=\prod_{j=1}^{k}(p^-+q^{2j-1}p^+)
\end{align*}
for $k \in \bZ_N$. On a suitably normalized sheet,  \cite[Lemma 2.7]{Ish} gives the relation
\begin{align*}
    \Psi_{\bp}(q^{2k})= \Psi_{\bp}(1) w(\bp|k).
\end{align*}
In this description, the branch problem concentrates on the part ``$\Psi_{\bp}(1)$''. Instead of trying to fix this branch, we choose an independent scalar $c \in \bC^\ast$ and define
\begin{equation}\label{eq:spectral-values}
 \Psi_{\bp;c}(q^{2k})
 :=c \cdot w(\bp | k).
\end{equation}
Then for a cyclic operator $\opX \in \mathrm{End}(V_N)$ having the spectrum $\bmu_N$, the functional calculus provides a well-defined operator $\Psi_{\bp;c}(\opX)$. In order to ensure the normalization $\det \Psi_{\bp;c}(\opX)=1$, we impose the condition 
\begin{align*}
    c^N=A(\bp)^{-1},
\end{align*}
where $A(\bp):=\prod_{k \in \bZ_N} w(\bp|k)$ is a holomorphic function on $\cF_N$. Therefore, our modified cyclic quantum dilogarithm is associated to a point in the extended Fermat curve
\begin{align*}
    \widehat{\cF}_N:= \{(\bp;c) \in \cF_N \times \bC^\ast \mid c^N = A(\bp)^{-1}\}.
\end{align*}
This modified definition satisfies the $q$-difference relation (\cref{lem:difference_eq}) as originally asserted, and the inversion relation (\cref{prop:inversion}) for an extended involution on $\widehat{\cF}_N$. However, due to the unavailability of the normalization argument at the boundary point $(p^+,p^-)=(0,1) \in \overline{\cF}_N \setminus \cF_N$, it is inevitable to have an undetermined $\bmu_N$-factor in the pentagon relation (\cref{thm:pentagon}). 

We remark that the spectral definition \eqref{eq:spectral-values} has already appeared essentially in the works of Baseilhac--Benedetti (e.g. \cite[(27)]{BB-GT}) and Kim \cite[Definition 2.17]{Kim19}, where the parameters corresponding to $c$ are treated as extra scalar inputs. It is referred to as the `determinant-like' normalization in \cite[Section 8]{BB-GT}. 
Kim obtains an undetermined non-zero scalar in the pentagon relation \cite[Theorem 4.2]{Kim19}, which is restricted to $\bmu_N$ under our determinant normalization.
On the other hand, our covering space $\widehat{\cF}_N$ appears to be novel.

% \begin{align*}
%     \Psi_{\bp;c}(\opX)\ket{\underline{k}_X} := \Psi_{\bp;c}(q^{2k})\ket{\underline{k}_X}
% \end{align*}
% makes sense, where $\{\ket{\underline{k}_X}\}_{k \in \bZ_N}$ is a basis of $V_N$ such that $\opX \ket{\underline{k}_X} = q^{2k} \ket{\underline{k}_X}$.

% This definition gives the operator identities
% \begin{align}
%  \Psi_{\bp;c}(q^2 \opX)
%   =(p^-+qp^+\opX)\Psi_{\bp;c}(\opX), \quad
%  \Psi_{\bp;c}(q^{-2}\opX)
%   =(p^-+q^{-1}p^+\opX)^{-1}\Psi_{\bp;c}(\opX)
% \end{align}
% on $V_N$, as claimed in 
% In particular, all conjugation computations that use only the difference
% equations are valid for every lift $\bp$.  The factors in
% $w(\bp | k)$ are nonzero on $\cF_N$, so these operators are
% invertible.

%The condition $X^N=\id$ alone does not imply that $X$ is conjugate to the
%standard clock matrix: repeated eigenvalues are possible.  Simple spectrum
%must be added when that conclusion, or the determinant formula below, is
%used.  The functional calculus \eqref{eq:operator-calculus} itself does not
%require simple spectrum.

\subsection{Extended global coefficients}
Let $\Sigma$ be a marked surface. 
Let $\mathrm{Pt}_\Sigma$, $\mathrm{Pt}_\Sigma^{\tdot}$ be the associated Ptolemy and dotted Ptolemy groupoids, respectively. 
Recall the space $\cF_\Sigma$ of \emph{global coefficients} \cite[Definition 4.5]{Ish}, which consists of pairwise mutation-compatible collections 
\begin{align*}
    \bbp_\Sigma=\{\bp^\tri\}_{\tri \in \mathrm{Pt}_\Sigma}
\end{align*}
of coefficient tuples, where each $\bp^\tri=(\bp^\tri_\alpha)_{\alpha \in e(\tri)}$ assigns one $\bp^\tri_\alpha \in \cF_N$ for each edge $\alpha$ of the triangulation $\tri$ (where we allow $\bp^\tri_\alpha \in \overline{\cF}_N$ for boundary edges), and the mutation-compatibility conditions impose monomial relations between $\bp^\tri$ and $\bp^{\tri'}$ for each flip $\mu_\kappa: \tri \to \tri'$, which in particular ensures the required relation \eqref{eq:pentagon_coeff_rel} for the pentagon identity everywhere in the groupoid $\mathrm{Pt}_\Sigma$.

Now we introduce the space $\widehat{\cF}_\Sigma$ of \emph{extended global coefficients} by upgrading each edge datum to be $\hatt\bp_\alpha^\tri=(\bp_\alpha^\tri;c_\alpha^\tri) \in \widehat{\cF}_N$, %without imposing extra relations among $c_\alpha^\tri$. 
where an extra \emph{inversion condition} $c_\kappa^{\tri'}c_\kappa^\tri = \zeta_\inv$ is imposed. %Here, $\zeta_\inv$ is computed in \cref{sec:other_root}. 
An element of $\widehat{\cF}_\Sigma$ is written as
\begin{align*}
    \hatt\bbp_\Sigma=\{\hatt\bp^\tri\}_{\tri \in \mathrm{Pt}_\Sigma}
\end{align*}
with $\hatt\bp^\tri=(\hatt\bp^\tri_\alpha)_{\alpha \in e(\tri)}$. With this extended data, the flip operator for $\mu_\alpha: \bD_1 \to \bD_2$ is defined as
\begin{align*}%\label{eq:flip_op}
        \opT_{vw}:= \Psi_{\bp_\alpha^{\tri_1};c_\alpha^{\tri_1}}(\dbra{\opP_v^{-1}\opU_v\opP_w})\opS_{vw}.
\end{align*}
The construction of the projective representation given in \cite[Section 4.4]{Ish} proceeds without further modification. The following gives an immediate correction to \cite[Theorem 2]{Ish}:

\begin{introprop}[\cref{thm:proj_rep_groupoid}]\label{introthm:proj_rep_groupoid}
For any extended global coefficient $\hatt\bbp_\Sigma=\{\hatt\bp_\tri\}_{\tri \in \mathrm{Pt}_\Sigma} \in \widehat{\cF}_\Sigma$, we have a projective representation $V_\ast(\hatt\bbp_\Sigma): \mathrm{Pt}^{\tdot}_\Sigma \to \mathrm{Vect}_\bC^\op$ such that 
\begin{enumerate}
    \item $\opA_v^3=\mathsf{1}$. 
    \item $\opT_{uv}\opT_{uw}\opT_{vw}=\omega\opT_{vw}\opT_{uv}$ for some $\omega \in \bmu_N$. 
    \item $\opA_v \opT_{vw} \opA_w =\opA_w \opT_{wv} \opA_v$.
    \item $\opT_{vw}\opA_v \opT_{wv} = \zeta \opA_v \opA_w \opP_{(12)}$, where $\zeta \in \bmu_{6N}$ is a scalar determined by $q^2$. 
    %for $q^2=e^{2\pi i/N}$.%\footnote{We note that the computation of $\zeta$ in \cite{Ish} is specialized to the case $q^2=e^{2\pi i/N}$. For a general primitive $N$-th root of unity, this phase factor is computed in \cite{Kas00}.} 
\end{enumerate}
\end{introprop}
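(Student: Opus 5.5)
The plan is to reduce each of the four relations to a statement about the modified dilogarithm $\Psi_{\bp;c}$ on a cyclic operator. We then invoke the corrected local results for $\Psi_{\bp;c}$ announced in the introduction: the $q$-difference relation (\cref{lem:difference_eq}), the inversion relation (\cref{prop:inversion}), and the pentagon relation up to $\bmu_N$ (\cref{thm:pentagon}). Everything else is copied from \cite[Section 4.4]{Ish}.

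The operators $\opA_v$, $\opS_{vw}$, $\opP_v$, $\opU_v$ and $\opP_{(12)}$ are exactly those of \cite{Ish}, and do not involve the coefficients at all. Relation (1), $\opA_v^3=\mathsf{1}$, is therefore verbatim the original statement. Relation (3) uses only the conjugation action of $\opA$ on $\opS_{vw}$ and on the argument $\dbra{\opP_v^{-1}\opU_v\opP_w}$. This action transports $\Psi_{\bp;c}(\opX)$ to $\Psi_{\bp;c}(\opX')$ with the same pair $(\bp;c)$, since functional calculus commutes with conjugation. Hence the argument of \cite{Ish} applies unchanged.

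For (2), we first expand $\opT_{uv}\opT_{uw}\opT_{vw}$ and $\opT_{vw}\opT_{uv}$ and move all $\opS$'s to the right. This uses the commutation rules of $\opS$ with the Weyl-type operators, exactly as in \cite{Ish}. The key point is that these manipulations only use the difference equation for conjugations, which now holds on every lift by \cref{lem:difference_eq}. The identity is thereby reduced to a pentagon identity
\[
\Psi_{\hatt\bp_1}(\opY)\Psi_{\hatt\bp_2}(\opX)=\omega\,\Psi_{\hatt\bp_3}(\opX)\Psi_{\hatt\bp_4}(q\opX\opY)\Psi_{\hatt\bp_5}(\opY)
\]
for a $q$-Weyl pair. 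The five coefficients are the ones attached to the flips in the pentagon of $\mathrm{Pt}_\Sigma$. Their $\bp$-parts satisfy \eqref{eq:pentagon_coeff_rel} by mutation-compatibility of the underlying global coefficient. We then apply \cref{thm:pentagon}.

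That theorem presumably gives equality up to a nonzero scalar, and the scalar lies in $\bmu_N$ by taking determinants. Indeed, $\det\Psi_{\bp;c}(\opX)=c^N A(\bp)=1$ for each factor, and $\opS$ has a fixed determinant that appears equally on both sides. This is where the extra $\omega$ comes from. I expect this step to be the main obstacle, because the normalization at the boundary point is no longer available. One must check that the unknown scalar is genuinely constant and not operator-valued. I would argue this by noting that both sides satisfy the same $q$-difference equations in $\opX$ and $\opY$, and therefore differ by a scalar by irreducibility of the cyclic representation.

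For (4), we compute $\opT_{vw}\opA_v\opT_{wv}$ as in \cite{Ish}. This yields a product $\Psi_{\bp;c}(\opX)\Psi_{\bp';c'}(\opX^{-1})$, up to the original $\opA\opA\opP_{(12)}$ factor. Here $(\bp';c')$ is the datum on the flipped edge, $\bp'$ is the involuted coefficient, and $c'c=\zeta_{\inv}$ by the inversion condition in $\widehat{\cF}_\Sigma$. \cref{prop:inversion} evaluates this product as a fixed scalar determined by $\zeta_{\inv}$. Combined with the Gaussian phase from $\opA$ and $\opS$ computed in \cite{Ish}, this gives $\zeta\in\bmu_{6N}$ depending only on $q^2$. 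Finally, functoriality of $V_\ast(\hatt\bbp_\Sigma)$ as a projective representation of $\mathrm{Pt}^{\tdot}_\Sigma$ follows since (1)–(4), together with the commutativity of distant flips, are the defining relations of the dotted Ptolemy groupoid. Distant flips commute because they involve commuting operators.
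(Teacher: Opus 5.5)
Your decomposition matches the paper's. Relations (1) and (3) never see the coefficients. Relation (2) reduces to the $\Psi$-pentagon of \cref{thm:pentagon}, where irreducibility gives a scalar and $\det\Psi_{\bp;c}(\opX)=c^NA(\bp)=1$ puts it in $\bmu_N$. Relation (4) is the computation of \cite[Section 9.2.4]{Ish}, with \cref{prop:inversion} made available by the inversion condition $c_\kappa^{\tri'}c_\kappa^{\tri}=\zeta_\inv$.

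\textbf{The gap: $\zeta\in\bmu_{6N}$ for a general primitive root.} You attribute this to the phases ``computed in \cite{Ish}''. But \cite{Ish} computed them only for $q^2=e^{2\pi i/N}$, while the statement is for an arbitrary primitive $N$-th root of unity. The paper fills this in separately, in the following steps.
\begin{enumerate}
\item The computation of \cite{Ish} gives $\zeta=\zeta_\inv(\zeta_A\zeta_-)^{-1}$.
\item $\zeta_-$ is evaluated by the quadratic Gauss sum.
\item $\zeta_\inv=e^{-\pi iA_{L,N}}$ is the choice justified by \cref{prop:positive_normalization}.
\item The cube root $\zeta_A$ is chosen using \cref{lem:2A+B}.
\end{enumerate}
This yields $\zeta=e^{\frac{\pi i}{3}(B_{L,N}-A_{L,N})}$, which lies in $\bmu_{6N}$ by \cref{cor:mu_6N}.

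If you only need membership in $\bmu_{6N}$ and not the explicit value, a short argument suffices once $\zeta=\zeta_\inv(\zeta_A\zeta_-)^{-1}$ is known. First, $\zeta_\inv^N=\prod_{k\in\bZ_N}\gamma(k)^{-1}=q^{-(N-1)N(2N-1)/6}$ is a cube root of unity, because $N$ divides $(N-1)N(2N-1)/2$. Second, $(\zeta_A\zeta_-)^3=\zeta_-^2/N=(-1)^{(N-1)/2}$. Hence $\zeta^{6N}=1$ for any admissible choice of these roots.

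\textbf{Your description of the inversion step in (4).} \cref{prop:inversion} does not turn $\Psi_{\bp;c}(\opX)\Psi_{(\bp;c)^{-1}}(\opX^{-1})$ into a scalar. It gives $\zeta_\inv\gamma(\opX)$. It is the Gaussian operator $\gamma(\opX)$ that, via the Fourier transformation formula, combines with the $\opA$'s and $\opS$ into $\opA_v\opA_w\opP_{(12)}$, producing the factor $(\zeta_A\zeta_-)^{-1}$.

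\textbf{Two smaller points in (2).}
\begin{enumerate}
\item Moving the $\opS$'s past the $\Psi$'s only uses that functional calculus commutes with conjugation. \cref{lem:difference_eq} is what enters the adjoint-action computation showing that the two sides of the $\Psi$-pentagon differ by a scalar.
\item The two sides of the $\opT$-pentagon contain three versus two $\opS$ factors. So the determinant argument should be run on the reduced $\Psi$-pentagon on $V_N$, as in \cref{thm:pentagon}.
\end{enumerate}
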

% Several comments are in order.
% \begin{itemize}
%     \item The relation (2) is a direct consequence of \cref{thm:pentagon}.
%     \item The relation (3) purely comes from a symmetry of $\opS_{vw}$, which is irrelevant from the cyclic quantum dilogarithm. 
%     \item The relation (4) follows from \cref{prop:inversion}.
% \end{itemize}
In \cite{Ish}, the explicit value of $\zeta_\inv$ and $\zeta$ are computed only for the choice $q^2=e^{2\pi i/N}$. In \cref{sec:other_root}, we will verify that they coincide with the values computed in \cite{Kas00} for any choice of a primitive $N$-th root of unity. We clarify that the value of $\zeta_\inv$ is canonically normalized in the sense of \cref{prop:positive_normalization}. 

The constructions of the trace invariants (\cite[Sections 6 and 8]{Ish}) of a mapping class are not affected by this correction, since the $\bmu_N$-ambiguity disappears by taking the absolute value. In particular, the invariance of the underlying global coefficient $\bbp \in \cF_\Sigma$ is sufficient to obtain these trace invariants.
%This $\bmu_N$-ambiguity is also irrelevant in the refined work \cite{GY26}.

\smallskip
We say that an extended global coefficient $\hatt \bbp_\Sigma \in \widehat{\cF}_\Sigma$ is \emph{strict} if the phase factor $\omega$ in \cref{introthm:proj_rep_groupoid} (2) is trivial along every pentagon cycle in $\mathrm{Pt}_\Sigma^{\tdot}$. In other words, the original theorem \cite[Theorem 2]{Ish} holds true for such a strict extended global coefficient. 

\begin{introthm}[\cref{thm:strict_coeff_exist}]
For any marked surface $\Sigma$, a strict extended global coefficient exists. Indeed, any point of
\begin{align*}
    \overline{\X}_\Sigma^{\mathrm{reg}}:= \{ \rho \in \overline{\X}_\Sigma \mid \text{$Y_\alpha^\tri(\rho) \neq -1$ for all $\tri \in \mathrm{Pt}_\Sigma$, $\alpha \in e(\tri)$}\}
\end{align*}
lifts to a strict extended global coefficient.
%Indeed, every quasi-Fuchsian representation $\rho: \pi_1(\Sigma) \to \PGL_2(\bC)$ lifts to a strict extended global coefficient. 
\end{introthm}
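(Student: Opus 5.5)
Given $\rho \in \overline{\X}_\Sigma^{\mathrm{reg}}$, I will first build the underlying global coefficient $\bbp_\Sigma \in \cF_\Sigma$ and then choose the $c$-data by continuity from a degenerate point where the phase can be computed directly.

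For the underlying coefficient, the condition $Y^\tri_\alpha(\rho)\neq -1$ only ensures $(Y^\tri_\alpha)^{1/N}\in B_N$ after choosing $N$-th roots. So I would set $y^\tri_\alpha:=(Y_\alpha^\tri(\rho))^{1/N}$, choosing roots compatibly along flips. This is possible because the cluster Poisson mutation rules are monomial in the $N$-th roots up to the factor $(1+Y)$, and the mutation-compatibility conditions of \cite[Definition 4.5]{Ish} are designed exactly so that such a lift exists. I then choose $\bp^\tri_\alpha\in\pi^{-1}(y^\tri_\alpha)$ satisfying the monomial relations. I would take the existence of this lift $\rho\mapsto\bbp_\Sigma$ from \cite{Ish}, since the coefficient space there is already a cover of $\overline{\X}_\Sigma$. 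Next I choose each $c^\tri_\alpha$ with $c^N=A(\bp)^{-1}$, subject to the inversion condition $c^{\tri'}_\kappa c^\tri_\kappa=\zeta_\inv$. I would check that this condition is consistent with the $N$-th power constraint using \cref{prop:inversion}.

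The key observation concerns the phase $\omega$ in \cref{thm:pentagon}. For each pentagon cycle it is a $\bmu_N$-valued function of the coefficient data. It is continuous, hence locally constant on the connected components of the space of extended coefficients satisfying the pentagon coefficient relation \eqref{eq:pentagon_coeff_rel}. Moreover, it is multiplicative in rescalings of the $c$'s. Replacing $c_\alpha\mapsto \lambda_\alpha c_\alpha$ with $\lambda_\alpha\in\bmu_N$ changes $\omega$ by the product of the $\lambda$'s: those on the left of the pentagon divided by those on the right. The plan is therefore in two steps. First, compute $\omega$ at a degenerate configuration. Second, use the $\bmu_N$ freedom in the $c$'s to kill it.

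The degenerate configuration is a limit toward the boundary point $(0,1)\in\overline{\cF}_N$. There $\Psi\to 1$, as in the original normalization argument of \cite[Section 9.1]{Ish}, which shows $\omega=1$ on the sheet where $c$ tends to $1$. Continuity then gives $\omega=1$ on the whole component containing that sheet.

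The main obstacle is the global consistency of these rescalings. Each edge datum $c^\tri_\alpha$ appears in several pentagons, and the inversion condition ties $c^{\tri}_\kappa$ to $c^{\tri'}_\kappa$. So one must solve a system over $\bmu_N$: a cochain $\lambda$ on the edges of the Ptolemy groupoid whose coboundary cancels the cocycle $\omega$ on all pentagon 2-cells. The cocycle condition on $\omega$ should come from associativity of the relations in $\mathrm{Pt}^{\tdot}_\Sigma$.

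My first attempt would avoid cohomology altogether by an explicit normalization. Fix a reference triangulation. Define $c$ on every flip by transporting along paths, using the canonical sheet determined by continuous deformation from the boundary $(0,1)$ within $\overline{\X}_\Sigma^{\mathrm{reg}}$. I expect the regularity hypothesis to be what makes this well defined. It keeps every $w(\bp|k)$ nonzero along the path, so $A(\bp)^{-1/N}$ has a holomorphic branch there, and $\omega$ stays locally constant, equal to its boundary value $1$. If path-independence fails, I would fall back to showing that the obstruction class in $H^2(\mathrm{Pt}_\Sigma;\bmu_N)$ vanishes. This holds because the Ptolemy complex is simply connected with 2-cells only the pentagons and squares, and $\omega$ is trivial on the squares.
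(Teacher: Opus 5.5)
\textbf{Verdict: there is a genuine gap.} The step you call the main obstacle, a globally coherent choice of the normalization factors $c^\tri_\alpha$, is never carried out, and both routes you sketch fail as stated.

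Your primary route, transporting each $c^\tri_\alpha$ ``from the boundary $(0,1)$'', is not well defined. Different edge data degenerate at different places; for instance $c^{\tri'}_\kappa$ would be normalized at $Y^\tri_\kappa=\infty$. By \cref{lem:monodromy_hatF} the result depends on the path, and regularity only makes continuation possible, not path-independent. The inversion condition with the fixed constant $\zeta_\inv=e^{-\pi i A_{L,N}}$ is also not just an $N$-th power consistency check. That check gives it only up to $\bmu_N$ (cf.\ \cref{rem:zeta_inv_choice}). For canonically normalized sheets it is the genuine computation $\lim_{Y\to+\infty}c(Y)=\zeta_\inv$ of \cref{prop:positive_normalization}.

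Your fallback is incorrect as justified. Simple connectivity of the flip complex $X$ gives $H^1=0$, but $H^2(X;\bmu_N)\cong\mathrm{Hom}(H_2(X),\bmu_N)$. Already for a hexagon $H_2(X)\neq 0$, since the $2$-skeleton of the $3$-dimensional associahedron is a $2$-sphere. To kill the class you would have to show the phases multiply to $1$ over every $2$-cycle. You would also need the square relations to be phase-free, but for arbitrary $c$'s they acquire $c^\tri_\alpha c^{\mu_\alpha(\tri)}_\beta/(c^\tri_\beta c^{\mu_\beta(\tri)}_\alpha)$. Finally, you take the underlying global coefficient over an arbitrary $\rho$ on faith from \cite{Ish}; note also that $\cF_\Sigma$ maps only into the regular locus, not onto all of $\overline{\X}_\Sigma$.

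The missing idea is that neither path-independence nor cohomology is needed, because only \emph{existence} of a lift is claimed. The paper proceeds as follows.
\begin{enumerate}
\item Use the contractible positive locus $\X_\Sigma(\pos)$ as a common base. Set $\hatt\bp^\tri_\alpha=(s(Y^\tri_\alpha);c(Y^\tri_\alpha))$, where $c$ is the branch of $A(s(Y))^{-1/N}$ continued along $[0,Y]$ from $c(0)=1$, so it is one function of $Y$ for every edge.
\item Mutation compatibility then follows from taking positive roots, and the inversion condition from \cref{prop:positive_normalization}.
\item For $\omega=1$: $\omega$ is a continuous $\bmu_N$-valued function of $(Y,Z)\in\bR_{\ge 0}^2$. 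At $Z=0$ one has $\hatt\br=\hatt\br'=\hatt\bp'=((0,1);1)$ and $\hatt\bp''=\hatt\bp$ \emph{as extended points}. Your ``$\Psi\to 1$'' heuristic only becomes an argument once the five $c$'s are tied to the $Y$'s by such a single rule all the way to the degenerate point.
\item For arbitrary $\rho$, pick \emph{one} path $\rho_t$ in $\overline{\X}_\Sigma^{\mathrm{reg}}$ from a positive point, which exists by \cref{lem:connectivity}. Continue all $y^\tri_\alpha$, $\bp^\tri_\alpha$, $c^\tri_\alpha$ simultaneously along it.
\item The mutation-compatibility ratios, $c^\tri_\kappa c^{\tri'}_\kappa\zeta_\inv^{-1}$, and every pentagon phase are continuous $\bmu_N$-valued functions equal to $1$ at $t=0$, hence equal to $1$ at $t=1$.
\end{enumerate}
This single continuation produces the underlying coefficient, the inversion condition and strictness at once.
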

The statement \cite[Theorem 2]{Ish} is repaired in this sense. The remaining statements are unchanged for the following reasons:

\begin{itemize}
    \item \cite[Theorem 3]{Ish} is unchanged, since our modification of $\Psi_{\bp;c}$ by scalars does not affect its adjoint action, and the proof is valid with the corrected q-difference relation (\cref{lem:difference_eq}).
    \item \cite[Theorem 4]{Ish} is unchanged for strict extended global coefficients.
    \item Local and irreducible quantum traces: unchanged because their definitions take absolute values. Also note the determinant normalization \eqref{eq:det_normalized}.
    \item \cite[Theorem 5]{Ish} and comparison with reduced QH operators: operator equality may acquire a root-of-unity scalar depending on normalization, while the stated absolute-value invariant is unchanged.
Any statement requiring a literal normalized operator identity rather than a projective identity should be understood with the new extended coefficient.
\end{itemize}

\subsection*{General notation}

\begin{itemize}
    \item Henceforth fix an odd integer $N \geq 3$. Let $\bZ_N:=\bZ/N\bZ$. 
    \item Let $q^2$ be a primitive $N$-th root of unity. Define its square root by $q:=(q^2)^{(N+1)/2}$ so that $q^N=1$. 
    %Throughout the paper, we fix $q^2:=e^{2\pi i/N}$ and $q:=-e^{\pi i/N}$. For the other choices of primitive $N$-th root of unity, only the values of scalars $\zeta_\inv$ and $\zeta$ will change, and the structural results remain unchanged. See \cref{sec:other_root}.
    \item Let $\bmu_N:=\{q^{2k}\}_{k \in \bZ_N}$, which does not depend on the choice of $q^2$. We consider it as a multiplicative subgroup of $\bC^\ast$.
    \item Let $V_N:=\bigoplus_{k \in \bZ_N} \bC \ket{k}$ be a complex $N$-dimensional vector space with a cyclically labeled basis $\{\ket{k}\}_{k \in \bZ_N}$. 
    \item Let $\opU,\opP \in \mathrm{End}(V_N)$ be standard operators given by $\opU\ket{k}:=q^{2k}\ket{k}$, $\opP\ket{k}:=\ket{k+1}$ for $k \in \bZ_N$.
\end{itemize}
See \cite{Ish} for further details. 

\subsection*{Acknowledgements}
The author is supported by JSPS KAKENHI Grant Number JP24K16914.

\subsection*{AI disclosure}
%ChatGPT-5.6 Sol (OpenAI) pointed out the problems to be fixed, and also suggested the basic strategy for the correction. The final proofs were supplied  by the author, and the entire text is human-produced with AI assistance for language editing. 
%The author takes the full responsibility for the content of the paper.
ChatGPT-6 Astra (OpenAI) was used to identify issues in the published paper \cite{Ish}, suggest possible correction strategies, find appropriate references, and correct grammatical errors. 
%In particular, the initial ideas of the statement and the proof of \cref{lem:2A+B} are obtained in this use.
All mathematical arguments have been independently reconstructed and finalized by the author. The correction strategy as presented in \cref{subsec:strategy}, among various possibilities, is adopted by the author.
%None of the text or references were generated by AI: 
The manuscript was written entirely by the author, and all the references were independently verified. The author takes full responsibility for the content of the manuscript.

\section{Spectral definition of cyclic quantum dilogarithm}

\subsection{Extended Fermat curve and its monodromy}
Let 
\[
 \cF_N=\{\bp=(p^+,p^-)\in(\bC^*)^2
 \mid (p^+)^N+(p^-)^N=1 \}
\]
be the punctured Fermat curve, and consider the projection
\begin{align*}
    \pi: \cF_N \to B_N, \quad (p^+,p^-) \mapsto y:=p^+/p^-.
\end{align*}
Here, $B_N:=\bC^*\setminus\{y\mid y^N=-1\}$. Then $\pi$ is a $\bZ_N$-covering. 
% We also introduce a partial compactification
% \begin{align*}
%     \overline{\cF}_N:=\cF_N \cup \{(0,1)\},
% \end{align*}
% which covers $\overline{B}_N:=B_N \cup \{0\}$. These partial compactifications will be used only 
Recall that we obtain a local section
\begin{align}\label{eq:local_section}
    s(y):= \bigg(y\bigg( \frac{1}{1+y^N}\bigg)^{\! 1/N}, \bigg( \frac{1}{1+y^N}\bigg)^{\! 1/N}\bigg)
\end{align}
of $\pi$ by choosing a branch of the $N$-th root on a simply-connected domain. 

\begin{lem}\label{lem:monodromy_F}
The monodromy of the covering $\pi: \cF_N \to B_N$ around $b_j:=-q^{1-2j}$ for $j=1,\dots,N$ is $e^{2\pi i/N}$, which generates $\bmu_N$. In particular, it is a connected cyclic covering of degree $N$.
\end{lem}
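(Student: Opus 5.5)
The plan is to compute the monodromy directly from the local section \eqref{eq:local_section}. Every point of $\cF_N$ over $y\in B_N$ has the form $\lambda\, s(y)$ with $\lambda\in\bmu_N$, and the fiber is a $\bmu_N$-torsor under the diagonal action $\bp\mapsto \lambda\bp$. So the monodromy along a loop $\gamma$ in $B_N$ is the factor by which $s(y)$ changes under analytic continuation along $\gamma$. That factor is the change of the branch of $(1+y^N)^{-1/N}$, namely $\exp\bigl(-\tfrac{1}{N}\cdot 2\pi i\cdot \mathrm{wind}(1+y^N\circ\gamma,0)\bigr)$.

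The first step is to identify the points removed from $\bC^\ast$. The zeros of $1+y^N$ are the $N$ roots of $y^N=-1$. With $q^N=1$ we have $(-q^{1-2j})^N=(-1)^N q^{N(1-2j)}=-1$, since $N$ is odd. The $b_j$ for $j=1,\dots,N$ are pairwise distinct because $q^{-2}$ is a primitive $N$-th root of unity. Hence the $b_j$ are exactly the punctures, and each is a simple zero of $1+y^N$.

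The second step is the winding count. For a small positive loop $\gamma_j$ around $b_j$, the function $1+y^N$ winds once around $0$, because the zero is simple. Continuing $(1+y^N)^{-1/N}$ along $\gamma_j$ therefore multiplies it by $e^{-2\pi i/N}$. The prefactor $y$ in the first component is single-valued, so $s(y)\mapsto e^{-2\pi i/N}s(y)$.

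The sign depends on convention. Whether one writes the deck transformation as $e^{2\pi i/N}$ or its inverse depends on the orientation convention for the loop, or on whether the monodromy is read as the action on the fiber or on the label $\lambda$. In either reading the generator is a primitive $N$-th root of unity, and I would fix the convention so that it matches the stated value $e^{2\pi i/N}$. Since $e^{2\pi i/N}$ is primitive, it generates $\bmu_N$ as a group; this uses only the group, not the chosen $q^2$.

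Finally, connectedness. The monodromy group of $\pi$ is contained in $\bmu_N$, which acts simply transitively on each fiber, and it contains a generator of $\bmu_N$. So it acts transitively on every fiber, and therefore $\cF_N$ is connected and $\pi$ is a connected cyclic covering of degree $N$. It is indeed a covering map, since $\pi$ is a local biholomorphism with the local inverses $\lambda s$ and has finite fibers of constant size $N$.

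The only mildly delicate point is this sign/orientation convention; everything else is routine.
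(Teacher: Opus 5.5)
Your proposal is correct and follows essentially the paper's route. The paper also continues the local section $s(y)$ along a small loop around the simple zero $b_j$ of $1+y^N$, reads off the monodromy from the branch of $(1+y^N)^{-1/N}$, and concludes transitivity of the diagonal $\bmu_N$-action. Your sign remark is also resolved the same way: the paper's loop $y_j(t)=b_j+\epsilon e^{-it}$ is clockwise, which is exactly why it gets $e^{2\pi i/N}$ rather than the $e^{-2\pi i/N}$ you obtain for a positive loop.
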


\begin{proof}
Note that $B_N=\bC^\ast \setminus \{b_1,\dots,b_N\}$. 
Let $y_j(t):=b_j + \epsilon e^{-it}$ be a small loop around $b_j$. A local section along $y_j(t)$ is given by
\begin{align*}
    p_j^-(t):=\bigg( \frac{1}{1+y_j(t)^N}\bigg)^{\! 1/N}, \quad p_j^+(t):=y_j(t)p_j^-(t)
\end{align*}
by choosing a branch of the $N$-th root. We have $p_j^-(t)= (\epsilon N b_j^{N-1} e^{-it}+O(\epsilon^2))^{-1/N} \sim (\epsilon N b_j^{N-1})^{-1/N}e^{it/N}$, which produces the monodromy $e^{2\pi i/N}$. The deck transformation gives $(p^+,p^-) \mapsto (e^{2\pi i/N} p^+, e^{2\pi i/N} p^-)$, which acts transitively on the fiber. 
The other points have no monodromy.
\end{proof}
This monodromy is precisely the reason for the phenomenon discussed in \cref{subsub:branch_anomaly}.

For $\bp \in \cF_N$ and $k \in \bZ_{\geq 0}$, define
\begin{align*}
    w(\bp | k) =\prod_{j=1}^{k}(p^-+q^{2j-1}p^+) = (p^-)^k (-qy:q^2)_k.
\end{align*}
We have $w(\bp|k+N)=w(\bp|k)$, and hence we regard $w(\bp|-)$ as a function of $k \in \bZ_N$. We have the difference relation
\begin{align}\label{eq:w_diff_rel}
    w(\bp|k+1)=(p^- + q^{2k+1}p^+) w(\bp|k).
\end{align}
Let $A(\bp):= \prod_{k \in \bZ_N} w(\bp|k)$, which gives a holomorphic function $A: \cF_N \to \bC^\ast$. 

\begin{lem}
The function $A$ descends to $A: B_N \to \bC^\ast$. 
\end{lem}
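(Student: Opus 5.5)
We need to show that $A(\bp)$ depends only on $y=\pi(\bp)$. By \cref{lem:monodromy_F}, the fibers of $\pi$ are exactly the orbits of the deck transformation $(p^+,p^-)\mapsto (\lambda p^+,\lambda p^-)$ with $\lambda\in\bmu_N$. Since $\pi$ is a covering map, a holomorphic function on $\cF_N$ that is invariant under all deck transformations descends to a holomorphic function on $B_N$. So it suffices to check that $A(\lambda\bp)=A(\bp)$ for every $\lambda\in\bmu_N$.

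This is a direct computation from the definition. Each factor $p^-+q^{2j-1}p^+$ is homogeneous of degree one in $(p^+,p^-)$. Hence $w(\lambda\bp|k)=\lambda^k w(\bp|k)$ for $k\in\{0,\dots,N-1\}$. Taking the product over $k\in\bZ_N$, with representatives $0,\dots,N-1$, gives
\begin{align*}
    A(\lambda\bp)=\lambda^{\sum_{k=0}^{N-1}k}A(\bp)=\lambda^{N(N-1)/2}A(\bp).
\end{align*}
Since $N$ is odd, $(N-1)/2$ is an integer, so $\lambda^{N(N-1)/2}=(\lambda^N)^{(N-1)/2}=1$. This is the same parity mechanism as the computation $q^{N(N-1)}=1$ in \cref{subsub:branch_anomaly}.

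Alternatively, one can write the descended function explicitly. From $w(\bp|k)=(p^-)^k(-qy;q^2)_k$ we get
\begin{align*}
    A=(p^-)^{N(N-1)/2}\prod_{k=0}^{N-1}(-qy;q^2)_k=(1+y^N)^{-(N-1)/2}\prod_{k=0}^{N-1}(-qy;q^2)_k,
\end{align*}
using $(p^-)^N=(1+y^N)^{-1}$. The right-hand side is a rational function of $y$, regular on $B_N$. It is nonvanishing on $B_N$ because each factor $1+q^{2j-1}y$ vanishes only at the excluded points $b_j$.

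The only point requiring care is the well-definedness of $w(\bp|k)$ as a function on $\bZ_N$. Because of the factor $\lambda^k$, one must fix the representatives $0,\dots,N-1$ before applying the scaling, and then use $w(\bp|k+N)=w(\bp|k)$. Apart from this, no step is a real obstacle.
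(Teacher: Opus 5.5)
Your proof is correct, but your main argument takes a different route from the paper. The paper argues purely by computation. It rewrites $A(\bp)=\prod_{j=1}^N(p^-+q^{2j-1}p^+)^{N-j+1}$ and uses $\prod_{j=1}^N(p^-+q^{2j-1}p^+)=(p^-)^N+(p^+)^N=1$ to reduce this to $(p^-)^{-N(N+1)/2}\prod_j(1+q^{2j-1}y)^{-j}$. Substituting $(p^-)^{-N}=1+y^N$ then gives an explicit function of $y$.

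Your primary argument instead checks invariance under the deck group. $A$ is homogeneous of degree $N(N-1)/2$, which is a multiple of $N$ because $N$ is odd, so $A(\lambda\bp)=A(\bp)$ for every $\lambda\in\bmu_N$. The fibres of $\pi$ are exactly the $\bmu_N$-orbits; this is elementary from $p^+=yp^-$ and $(p^-)^N=(1+y^N)^{-1}$, so you do not really need \cref{lem:monodromy_F}. Hence $A$ descends. This is shorter and isolates the actual mechanism: the same divisibility of the total degree by $N$ is what makes the paper's factor $(p^-)^{-N(N+1)/2}$ a function of $y$. However, it produces no formula.

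Your ``alternative'' paragraph is essentially the paper's proof. Your expression $(1+y^N)^{-(N-1)/2}\prod_{j=1}^{N-1}(1+q^{2j-1}y)^{N-j}$ agrees with the paper's via $\prod_{j=1}^N(1+q^{2j-1}y)=1+y^N$, and it simplifies directly to $\prod_{j=1}^N(1+q^{2j-1}y)^{M-j}$ with $M=(N+1)/2$. That is the form \eqref{eq:A_function_y} the paper needs later, for the monodromy of $\widehat{B}_N$ in \cref{lem:monodromy_hatF} and for the limit $Y\to+\infty$ in \cref{prop:positive_normalization}. So the explicit route is the one that carries the downstream information, while your invariance argument is the cleaner proof of the bare statement.
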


\begin{proof}
Let $y:=\pi(\bp)=p^+/p^-$. 
By a direct computation, we obtain
\begin{align*}
    A(\bp) &= \prod_{k=1}^N w(\bp|k) \\
    &= \prod_{j=1}^N (p^- + q^{2j-1}p^+)^{N-j+1} \\
    &= ((p^-)^N+(p^+)^N)^{N+1}\prod_{j=1}^N (p^- + q^{2j-1}p^+)^{-j} \\
    &= \prod_{j=1}^N (p^- + q^{2j-1}p^+)^{-j} \\
    &= (p^-)^{-N(N+1)/2} \prod_{j=1}^N (1 + q^{2j-1}y)^{-j} \\
    &= (1+y^N)^{(N+1)/2} \prod_{j=1}^N (1 + q^{2j-1}y)^{-j},
\end{align*}
where we used \eqref{eq:local_section} in the last line. 
Evidently, this only depends on the ratio $y=p^+/p^-$.
\end{proof}
We will interchangeably write $A(\bp)$ or $A(y)$. Noting that $M:=(N+1)/2$ is an integer, we can further rewrite it as
\begin{align}\label{eq:A_function_y}
    A(y)=\prod_{j=1}^N (1 + q^{2j-1}y)^{M-j}.
\end{align}

\begin{dfn}[Spectral cyclic quantum dilogarithm]\ {}
\begin{enumerate}
    \item Define
    \begin{align*}
        \widehat{\cF}_N:=\{ \hatt\bp=(\bp;c) \in \cF_N \times \bC^\ast \mid c^N=A(\bp)^{-1}\},
    \end{align*}
    which is a $\bZ_N$-covering over $\cF_N$.
    We call $\bp \in \cF_N$ the \emph{underlying coefficient}, and $c \in \bC^\ast$ the \emph{normalization factor} of $\hatt\bp$. 
    %We also introduce $\widehat{\overline{\cF}}_N:=\widehat{\cF}_N \cup (\{0\}\times \bmu_N)$, which covers $\overline{\cF}_N$.
    \item Given $\hatt\bp=(\bp;c) \in \widehat{\cF}_N$, define
    \begin{align*}
        \Psi_{\hatt\bp}(q^{2k})=\Psi_{\bp;c}(q^{2k}):=c \cdot w(\bp|k)
    \end{align*}
    for $k \in \bZ_N$. We call $\Psi_{\hatt\bp}(q^{2k})$ the \emph{spectral cyclic quantum dilogarithm} associated with $(\bp;c) \in \widehat{\cF}_N$.
\end{enumerate}
\end{dfn}

\begin{lem}\label{lem:monodromy_hatF}
The covering $\widehat{\cF}_N \to \cF_N$ is a connected cyclic covering of degree $N$.
\end{lem}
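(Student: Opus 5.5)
The plan is to treat $\widehat{\cF}_N\to\cF_N$ as a Kummer covering. It is cut out by $c^N=A(\bp)^{-1}$ with $A:\cF_N\to\bC^\ast$ holomorphic and nowhere vanishing. Hence it is an unramified $N$-sheeted covering, and the group $\bmu_N$ acts freely and transitively on each fiber by $c\mapsto q^{2}c$, commuting with the projection. So it is a cyclic (Galois) covering of degree $N$ with deck group $\bmu_N$, and it remains to prove connectedness. For a Kummer covering this holds if and only if the monodromy homomorphism
\[
\pi_1(\cF_N)\to \bZ_N,\qquad \gamma\mapsto \frac{1}{2\pi i}\oint_\gamma d\log A \bmod N,
\]
is surjective. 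Equivalently, we need a closed loop in $\cF_N$ along which $A$ has winding number coprime to $N$.

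Since $A$ descends to $B_N$, we compute windings downstairs using the formula \eqref{eq:A_function_y}, $A(y)=\prod_{j=1}^N(1+q^{2j-1}y)^{M-j}$. The factor $1+q^{2j-1}y$ vanishes exactly at $y=b_j=-q^{1-2j}$. Hence a small positively oriented loop around $b_j$ changes $\arg A$ by $2\pi(M-j)$. Such a loop by itself does not lift to a closed loop in $\cF_N$, because by \cref{lem:monodromy_F} its monodromy in $\pi$ is a generator of $\bmu_N$. Indeed, lifting its $N$-th power yields winding $N(M-j)\equiv 0$, which is useless.

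The key step is therefore to choose a loop $\gamma$ in $B_N$ with trivial $\pi$-monodromy but nontrivial winding of $A$. Take $\gamma$ to encircle $b_{j+1}$ once positively and $b_j$ once negatively, both joined to a common base point by paths. By \cref{lem:monodromy_F}, each puncture contributes monodromy $e^{2\pi i/N}$ with the same orientation convention, so the total $\pi$-monodromy of $\gamma$ is $e^{2\pi i/N}e^{-2\pi i/N}=1$. Hence $\gamma$ lifts to a closed loop $\tilde\gamma$ in $\cF_N$. Along $\tilde\gamma$ the winding number of $A$ equals that of $A(y)$ along $\gamma$, namely $(M-j-1)-(M-j)=-1$. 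Consequently the monodromy of $c=A^{-1/N}$ along $\tilde\gamma$ is $e^{2\pi i/N}$, which generates $\bmu_N$. Thus the deck group acts transitively on a fiber via monodromy, and $\widehat{\cF}_N$ is connected.

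The main obstacle I expect is exactly this choice of loop. Natural candidates have winding divisible by $N$: small loops around the punctures of $\cF_N$ give $\sum_i i - jN\equiv 0$. So one must verify carefully that the two local monodromies in \cref{lem:monodromy_F} have the same sign, so that the combined loop genuinely closes up in $\cF_N$, and keep consistent orientations when reading off the windings $M-j$ and $M-j-1$.
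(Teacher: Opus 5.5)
Your proof is correct and is essentially the paper's argument. The paper computes the monodromy of the composite covering $\widehat{\cF}_N\to B_N$ around $b_j$ as $\eta_j=(e^{2\pi i/N},q^{2(M-j)})$ and uses $\eta_{M-1}/\eta_M$, which is exactly your loop around two adjacent punctures with opposite orientations: trivial $\pi$-monodromy, and $c$-monodromy a generator of $\bmu_N$. Your step ``monodromy transitive on a fiber implies $\widehat{\cF}_N$ connected'' also uses that the base $\cF_N$ is connected, which is \cref{lem:monodromy_F}; it is worth citing that explicitly.
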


\begin{proof}
Let us consider another covering
\begin{align*}
    \widehat{B}_N:= \{ (y;c) \in B_N \times \bC^\ast \mid c^N=A(y)^{-1}\}
\end{align*}
over $B_N$. From the expression \eqref{eq:A_function_y}, this covering has the monodromy along a clockwise loop around $b_j$ with deck transformation given by $c \mapsto q^{2(M-j)} c$, $j=1,\dots,N$. We have the pull-back diagram
\begin{equation*}
\begin{tikzcd}
    \widehat{\cF}_N \ar[r] \ar[d] & \cF_N \ar[d]\\
    \widehat{B}_N \ar[r] & B_N
\end{tikzcd}
\end{equation*}
of coverings. The deck transformations of 
$\cF_N \to B_N$ and $\widehat{B}_N \to B_N$ around $b_j$ are given by
$(p^+,p^-;c) \mapsto (e^{2\pi i/N}p^+,e^{2\pi i/N} p^-;c)$ by \cref{lem:monodromy_F} and $(p^+,p^-;c) \mapsto (p^+,p^-;q^{2(M-j)} c)$ from \eqref{eq:A_function_y}, respectively, which commute with each other. %Each generates the monodromy group $\bmu_N$. 
Therefore, the monodromy group of the composite covering $\widehat{\cF}_N \to B_N$ is precisely $\bmu_N \times \bmu_N$. Indeed, the monodromy around $b_j$ gives $\eta_j:=(e^{2\pi i/N},q^{2(M-j)})$ for $j=1,\dots,N$, and hence $\eta_M=(e^{2\pi i/N},1)$ and $\eta_{M-1}/\eta_M=(1,e^{2\pi i/N})$ generate $\bmu_N \times \bmu_N$.

Thus the monodromy group of $\widehat{\cF}_N \to \cF_N$ is $\bmu_N$, which shows the connectivity.
\end{proof}
From this, we see that a section $c=c(\bp)$ of $\widehat{\cF}_N \to \cF_N$ can only be locally defined. This explains why we need the normalization factor $c$ as an extra input.

\begin{dfn}[partial compactification]
The covering $\pi: \cF_N \to B_N$ is extended to a map $\pi: \overline{\cF}_N:=\cF_N \cup \{(0,1)\} \to B_N \cup \{0\}$.\footnote{In order to extend $\pi$ as a degree $N$ covering, we also need to add $(0,\eta)$ for $\eta \in \bmu_N$ to $\cF_N$. The space $\overline{\cF}_N$ is sufficient for our purpose of normalization arguments at $(0,1)$.}
Since $A(0,1)=1$, the extended degree $N$ covering
\begin{align*}
    \widehat{\overline{\cF}}_N:= \{ \hatt\bp=(\bp;c) \in \overline{\cF}_N \times \bC^\ast \mid c^N= A(\bp)^{-1}\} \to \overline{\cF}_N
\end{align*}
is locally trivial at $(0,1)$.
\end{dfn}
The functions $w(\bp|k)$ and $\Psi_{\hatt\bp}(q^{2k})$ are continuously extended to
\begin{align*}
    w(-|k): \overline{\cF}_N \to \bC, \quad \Psi_{-}(q^{2k}): \widehat{\overline{\cF}}_N \to \bC 
\end{align*}
by
$w(0,1|k)=1$ and $\Psi_{(0,1);c}(q^{2k})=c$ for $k \in \bZ_N$, $c \in \bmu_N$. We will use these extensions only for the normalization argument at $(0,1) \in \overline{\cF}_N \setminus \cF_N$ or its lift to $\widehat{\overline{\cF}}_N$.

\subsection{Functional calculus}
Let us call an operator $\opX \in \mathrm{End}(V_N)$ with simple spectrum $\{q^{2k}\}_{k\in \bZ_N}$ a \emph{cyclic operator} on $V_N$. Choose a basis $\ket{\mb{k}_\opX}$ such that $\opX\ket{\mb{k}_\opX} = q^{2k} \ket{\mb{k}_\opX}$ and define $\Psi_{\bp;c}(\opX) \in \mathrm{End}(V_N)$ by
\begin{align*}
    \Psi_{\bp;c}(\opX)\ket{\mb{k}_\opX} := \Psi_{\bp;c}(q^{2k}) \ket{\mb{k}_\opX}.
\end{align*}
The spectral cyclic quantum dilogarithm is designed so that
\begin{align}\label{eq:det_normalized}
    \det \Psi_{\bp;c}(\opX) = \prod_{k \in \bZ_N} \Psi_{\bp;c}(q^{2k}) = c^N A(\bp)= 1.
\end{align}
Compare with \cite[Lemma 2.8]{Ish}.

\subsection{$q$-difference relations}
\begin{lem}[cf. {\cite[Lemma 2.5]{Ish}}]\label{lem:difference_eq}
For any $(\bp;c) \in \widehat{\overline{\cF}}_N$ and a cyclic operator $\opX \in \mathrm{End}(V_N)$, we have the $q$-difference relations
\begin{align*}
    \Psi_{\bp;c}(q^2\opX) &= (p^- + q p^+ \opX) \Psi_{\bp;c}(\opX), \\
    \Psi_{\bp;c}(q^{-2}\opX) &= (p^- + q^{-1}p^+ \opX)^{-1} \Psi_{\bp;c}(\opX),
\end{align*}
on $V_N$.
\end{lem}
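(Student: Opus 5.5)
The plan is to check both identities on the eigenbasis $\{\ket{\mb{k}_\opX}\}_{k\in\bZ_N}$ of the cyclic operator $\opX$. All operators involved are functions of $\opX$, so it is enough to compare eigenvalues, and the relations then reduce to the difference relation \eqref{eq:w_diff_rel} for $w(\bp|k)$.

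\textbf{First relation.} Since $\opX$ has simple spectrum $\bmu_N$, the operator $q^2\opX$ is also cyclic, with $q^2\opX\ket{\mb{k}_\opX}=q^{2(k+1)}\ket{\mb{k}_\opX}$. So the vector $\ket{\mb{k}_\opX}$ is the eigenvector of $q^2\opX$ labelled by $k+1$. By the spectral definition,
\begin{align*}
\Psi_{\bp;c}(q^2\opX)\ket{\mb{k}_\opX}=c\, w(\bp|k+1)\ket{\mb{k}_\opX}.
\end{align*}
The labelling is well defined because $w(\bp|-)$ is $N$-periodic. The right-hand side acts on $\ket{\mb{k}_\opX}$ by the scalar $(p^-+qp^+q^{2k})\,c\,w(\bp|k)$. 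Since $q\cdot q^{2k}=q^{2k+1}$, \eqref{eq:w_diff_rel} says these two scalars agree.

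\textbf{Second relation.} Apply the first relation to the cyclic operator $q^{-2}\opX$, which gives
\begin{align*}
\Psi_{\bp;c}(\opX)=(p^-+q^{-1}p^+\opX)\Psi_{\bp;c}(q^{-2}\opX).
\end{align*}
Here we used $q\cdot q^{-2}=q^{-1}$. We then need $p^-+q^{-1}p^+\opX$ to be invertible. Its eigenvalues are $p^-+q^{2k-1}p^+$, and each of these is a factor of some $w(\bp|k')$. For $\bp\in\cF_N$ they are nonzero because
\begin{align*}
\prod_{j}(p^-+q^{2j-1}p^+)=(p^-)^N+(p^+)^N=1
\end{align*}
(using that $N$ is odd). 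At the boundary point $(0,1)$ every factor equals $1$. Multiplying by the inverse gives the second relation, and both operators are functions of $\opX$, so they commute.

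\textbf{Boundary point.} At $\bp=(0,1)$ we have $w\equiv 1$ and $\Psi_{(0,1);c}(q^{2k})=c$, so both sides are just $c\cdot\mathsf{1}$. This case is therefore trivial.

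The only delicate step is being careful that the spectral definition for $q^{\pm2}\opX$ uses the shifted labelling of the same eigenbasis, and that the result does not depend on the choice of eigenbasis. The latter holds because $\opX$ has simple spectrum, so each eigenvector is determined up to scaling. No branch choice enters anywhere: the normalization factor $c$ is a common scalar that appears on both sides, which is exactly why this definition removes the anomaly described in \cref{subsub:branch_anomaly}.
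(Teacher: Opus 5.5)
Your proof is correct and is the paper's argument: the paper proves the lemma in one line, from the spectral definition and the difference relation \eqref{eq:w_diff_rel}. Your version just writes out the bookkeeping the paper leaves implicit, namely the shifted eigenlabels for $q^{\pm2}\opX$, the $N$-periodicity of $w(\bp|-)$, and the invertibility of $p^-+q^{-1}p^+\opX$ via $\prod_{j}(p^-+q^{2j-1}p^+)=(p^-)^N+(p^+)^N=1$.
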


\begin{proof}
Follows from the definitions and \eqref{eq:w_diff_rel}.
\end{proof}

\subsection{Inversion relation}
Recall the Gauss coefficient $\gamma(k):=q^{k^2}$ for $k \in \bZ_N$. We have the inversion relation \cite[Theorem 2.9 (1)]{Ish}
\begin{align}\label{eq:inversion_w}
    w(\bp|k) w(\bp^{-1}|-k) = \gamma(k)
\end{align}
for $k \in \bZ_N$, where $\bp^{-1}:=(p^-,p^+)$ for $\bp=(p^+,p^-)$. 

\begin{lem}\label{lem:involution}
We have a well-defined involution $(\bp;c) \mapsto (\bp;c)^{-1}$ on $\widehat{\cF}_N$ given by 
\begin{align}\label{eq:involution}
    (\bp;c)^{-1} := (\bp^{-1}; \zeta_\inv c^{-1}),
\end{align}
where %$\zeta_\inv:=e^{\frac{\pi}{6}(N-\frac{1}{N})}$.
$\zeta_\inv:=e^{-\pi i A_{L,N}}$ (\cref{prop:positive_normalization}).
\end{lem}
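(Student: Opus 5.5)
The plan is to check three things directly from the definitions. First, $\bp^{-1}=(p^-,p^+)$ lies in $\cF_N$. Second, the new normalization factor $\zeta_\inv c^{-1}$ satisfies the defining equation of $\widehat{\cF}_N$ over $\bp^{-1}$. Third, the map squares to the identity.

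The first point is immediate, since the Fermat equation $(p^+)^N+(p^-)^N=1$ is symmetric in $p^\pm$. The third is also formal: since $(\bp^{-1})^{-1}=\bp$, applying \eqref{eq:involution} twice gives
\begin{align*}
(\bp;c)\mapsto(\bp^{-1};\zeta_\inv c^{-1})\mapsto(\bp;\zeta_\inv(\zeta_\inv c^{-1})^{-1})=(\bp;c).
\end{align*}
This holds for any constant $\zeta_\inv\in\bC^\ast$. So the only substantive point is the second one.

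For the second point, assume $c^N=A(\bp)^{-1}$. Then $(\zeta_\inv c^{-1})^N=\zeta_\inv^N A(\bp)$, so we must show
\begin{align*}
A(\bp)A(\bp^{-1})=\zeta_\inv^{-N}\qquad\text{for all }\bp\in\cF_N.
\end{align*}
To get this, take the product of the inversion relation \eqref{eq:inversion_w} over all $k\in\bZ_N$. The map $k\mapsto -k$ is a bijection of $\bZ_N$, so $\prod_{k}w(\bp^{-1}|-k)=A(\bp^{-1})$. Hence
\begin{align*}
A(\bp)A(\bp^{-1})=\prod_{k\in\bZ_N}\gamma(k)=q^{\sum_{k=0}^{N-1}k^2}=q^{N(N-1)(2N-1)/6}.
\end{align*}
This uses that $\gamma$ is well defined on $\bZ_N$: indeed $q^{(k+N)^2}=q^{k^2}q^{2kN+N^2}=q^{k^2}$ because $q^N=1$. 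In particular, the product $A(\bp)A(\bp^{-1})$ is a constant independent of $\bp$, which is what makes a single constant $\zeta_\inv$ possible.

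It remains to check that $\zeta_\inv=e^{-\pi i A_{L,N}}$ satisfies
\begin{align*}
\zeta_\inv^{-N}=e^{\pi i N A_{L,N}}=q^{N(N-1)(2N-1)/6}.
\end{align*}
I expect this identification to be the main obstacle, since it depends on the precise definition of $A_{L,N}$ in \cref{prop:positive_normalization} and on the chosen primitive root $q^2$. My plan is to write $q^2=e^{2\pi i L/N}$ with $\gcd(L,N)=1$, so that $q=(q^2)^{(N+1)/2}=e^{\pi i L(N+1)/N}$. The right-hand side then becomes $e^{\pi i L(N+1)(N-1)(2N-1)/6}$, up to the parity bookkeeping coming from $q^N=1$. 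I would compare this exponent modulo $2$ with $NA_{L,N}$ as given in \cref{prop:positive_normalization}. This reduces to an elementary congruence, which should follow from the defining property of $A_{L,N}$, namely that it is a root of the constant $\prod_k\gamma(k)$ chosen with a canonical (positive) normalization.
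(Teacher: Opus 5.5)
Your reduction is the paper's own. You multiply \eqref{eq:inversion_w} over $k\in\bZ_N$, reindex $k\mapsto -k$, and observe that well-definedness is equivalent to $\prod_{k\in\bZ_N}\gamma(k)=\zeta_\inv^{-N}=e^{\pi iNA_{L,N}}$; the involutivity check is a harmless addition. The gap is that this last identity is never proved, and the justification you offer for it is not correct. It is the only non-formal input, and the paper isolates it as \cref{prop:A_gamma}. $A_{L,N}$ is not \emph{defined} as a normalized $N$-th root of $\prod_k\gamma(k)$. It is Kashaev's explicit sum $A_{L,N}=\frac1N\sum_{k=1}^{N-1}k\bigl(\{\frac{(2k+1)L}{N}\}_1-\{[\frac{(2k+1)L}{N}]_1\}_2\bigr)$. \cref{prop:positive_normalization} is a separate statement about which $N$-th root is selected at positive points. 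Its proof actually uses the identity $A(\bp)A(\bp^{-1})=\zeta_\inv^{-N}$ you are trying to establish, so you cannot simply cite it. You could instead extract the identity from that proposition's limit computation, $c(Y)\to e^{-\pi iA_{L,N}}$ as $Y\to+\infty$, together with $c(0)=1$. But that limit is itself a residue computation you would have to carry out.

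Your parametrization would also derail the congruence. In the paper, $q^2=e^{2\pi ia/N}$ and $L\in\{N-a,-a\}$ is chosen \emph{even} with $q=e^{-\pi iL/N}$, and the identity is tied to this $L$. With your $q^2=e^{2\pi iL/N}$ it fails. For example, take $N=3$ and $q^2=e^{2\pi i/3}$: your $L=1$ gives $e^{\pi iNA_{1,3}}=e^{\pi i/3}$, whereas $\prod_k\gamma(k)=q^5=e^{2\pi i/3}=e^{\pi iNA_{2,3}}$.

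The missing computation is short. Write $L=2\ell$, and let $\langle x\rangle_N\in\{-\frac{N-1}{2},\dots,\frac{N-1}{2}\}$ be the centered residue of $x\in\bZ$. The identity $\{\frac{2x}{N}\}_1-\{[\frac{2x}{N}]_1\}_2=\frac{2\langle x\rangle_N}{N}$ gives $A_{L,N}=\frac{2}{N^2}\sum_{k=1}^{N-1}k\langle(2k+1)\ell\rangle_N$. Since $\langle x\rangle_N\equiv x\bmod N$, this yields $NA_{L,N}\equiv\frac LN\sum_{k=1}^{N-1}k(2k+1)\pmod 2$. Finally, $\frac LN\sum_{k=1}^{N-1}\bigl(k(2k+1)+k^2\bigr)=L(N-1)N\in2\bZ$. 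Hence $e^{\pi iNA_{L,N}}=e^{-\pi i\frac LN\sum_k k^2}=q^{\sum_k k^2}=\prod_k\gamma(k)$.
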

We postpone explicit computations around the phase factor $\zeta_\inv$ until \cref{sec:other_root}.

\begin{proof}
We need to check that the defining relation $c^N=A(\bp)^{-1}$ is preserved. We first show that $A(\bp) A(\bp^{-1}) = \zeta_\inv^{-N}$.
%(cf. \cite[Theorem 2.9 (2)]{Ish}).
Indeed,
\begin{align*}
    A(\bp) A(\bp^{-1}) = \prod_{k \in \bZ_N} w(\bp|k)w(\bp^{-1}|-k) = \prod_{k \in \bZ_N} \gamma(k) = e^{\pi i N A_{L,N}}= \zeta_\inv^{-N}.
\end{align*}
Here we used \cref{prop:A_gamma}.
Therefore,
\begin{align*}
    A(\bp^{-1})^{-1} = \zeta_\inv^N A(\bp) = (\zeta_\inv c^{-1})^N
\end{align*}
as desired.
\end{proof}

\begin{rem}\label{rem:zeta_inv_choice}
The proof also shows that the involution \eqref{eq:involution} is canonical up to multiplication by an element of $\bmu_N$.
Indeed, if we modify the involution by $(\bp;c)^{-1} := (\bp^{-1}; \eta c^{-1})$ for some $\eta \in \bC^\ast$, then it must hold that $\eta^N=\zeta_\inv^N$. We will see in \cref{prop:positive_normalization} that the choice $\eta=\zeta_\inv$ is correctly normalized.
\end{rem}

\begin{prop}[cf. {\cite[Theorem 2.9]{Ish}}]\label{prop:inversion}
For any $(\bp;c) \in \widehat{\cF}_N$ and $k \in \bZ_N$, we have the following:
\begin{align}\label{eq:inversion}
    \Psi_{\bp;c}(q^{2k}) \Psi_{(\bp;c)^{-1}}(q^{-2k}) = \gamma(k) \zeta_\inv. 
\end{align}
\end{prop}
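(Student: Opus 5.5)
This follows directly by unwinding the definitions and applying the inversion relation \eqref{eq:inversion_w} for $w$; no analytic input is needed. Write $(\bp;c)^{-1}=(\bp^{-1};\zeta_\inv c^{-1})$ as in \cref{lem:involution}. This is a legitimate point of $\widehat{\cF}_N$ by that lemma, so the spectral dilogarithm $\Psi_{(\bp;c)^{-1}}$ is defined.

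By the definition of the spectral cyclic quantum dilogarithm, evaluating at the spectral point $q^{-2k}=q^{2(-k)}$ gives
\begin{align*}
    \Psi_{(\bp;c)^{-1}}(q^{-2k}) = \zeta_\inv c^{-1}\, w(\bp^{-1}|-k),
\end{align*}
where $-k$ is read in $\bZ_N$. This is legitimate because $w(\bp^{-1}|-)$ is $N$-periodic and hence a function on $\bZ_N$. Likewise $\Psi_{\bp;c}(q^{2k})=c\,w(\bp|k)$. Multiplying the two, the factors $c$ and $c^{-1}$ cancel, and we obtain
\begin{align*}
    \Psi_{\bp;c}(q^{2k}) \Psi_{(\bp;c)^{-1}}(q^{-2k}) = \zeta_\inv\, w(\bp|k)\, w(\bp^{-1}|-k) = \zeta_\inv\,\gamma(k),
\end{align*}
using \eqref{eq:inversion_w} in the last step. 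This is \eqref{eq:inversion}.

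The only point needing care is consistency of $\bZ_N$-indexing. The function $\gamma(k)=q^{k^2}$ must be well defined on $\bZ_N$, which holds because $q^N=1$ and $(k+N)^2-k^2=2kN+N^2$ is divisible by $N$. Also, \eqref{eq:inversion_w} is quoted from \cite[Theorem 2.9 (1)]{Ish} as a purely algebraic identity in $w$, with no branch choices. If one wanted to be self-contained, I would verify \eqref{eq:inversion_w} by comparing the finite products
\begin{align*}
    w(\bp|k)=\prod_{j=1}^k (p^-+q^{2j-1}p^+), \qquad w(\bp^{-1}|N-k)=\prod_{j=1}^{N-k}(p^++q^{2j-1}p^-).
\end{align*}
Their product is, up to the monomial $\gamma(k)$ coming from reindexing $j\mapsto N+1-j$ and factoring out $q^{2j-1}$, equal to $\prod_{j=1}^N(p^-+q^{2j-1}p^+)=(p^-)^N+(p^+)^N=1$, since $N$ is odd. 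The main obstacle there would be tracking the power of $q$ to get exactly $q^{k^2}$. Since \eqref{eq:inversion_w} is already available, the proposition reduces to the one-line computation above.
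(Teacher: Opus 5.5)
Your proposal is correct and is the paper's argument: the paper's proof is the one-line remark that the identity follows from the definitions and \eqref{eq:inversion_w}, which is exactly what you carry out, with $c\cdot\zeta_\inv c^{-1}=\zeta_\inv$. Your optional self-contained check of \eqref{eq:inversion_w} is also sound. It reindexes by $j\mapsto N+1-j$, factors out $q^{(N-k)^2}=q^{k^2}$, and uses $\prod_{j=1}^{N}(p^-+q^{2j-1}p^+)=(p^-)^N+(p^+)^N=1$ for odd $N$.
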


\begin{proof}
Follows from the definitions and \eqref{eq:inversion_w}.
\end{proof}

\subsection{Pentagon relation}
As noted in \cref{sec:intro}, the phase ambiguity in the operator pentagon relation is inevitable, and indeed depends on the choice of coefficients. Therefore, we restate it in the following way.

\begin{thm}[cf. {\cite[Theorems 3.2 and 3.3]{Ish}}]\label{thm:pentagon}
Choose $\bp,\br,\bp',\br',\bp'' \in \overline{\cF}_N$ so that
\begin{align}\label{eq:pentagon_coeff_rel}
    \begin{aligned}
    \frac{{r'}^+}{{r'}^-} &= \frac{{r}^+}{{r}^-} (p^-)^{-1}, &
    \frac{{p''}^-}{{p''}^+} &= \frac{{p}^-}{{p}^+} ({r'}^-)^{-1}, &  
    \frac{{p'}^-}{{p'}^+} &= \frac{{r'}^-}{{r'}^+} ({p''}^+)^{-1}, \\
    \frac{{p''}^+}{{p''}^-} &= \frac{{r}^-}{{r}^+} {p'}^+, &
    \frac{{p'}^+}{{p'}^-} &= \frac{{p}^+}{{p}^-} {r}^+.
    \end{aligned}
\end{align}
Choose any lifts $\hatt\bp,\hatt\br,\hatt\bp',\hatt\br',\hatt\bp'' \in \widehat{\overline{\cF}}_N$. 
Then we have the pentagon relation
\begin{align}\label{eq:pentagon}
    \Psi_{\hatt\bp}(\opU) \Psi_{\hatt\br'}(\opP) = \omega \Psi_{\hatt\br} (\opP) \Psi_{\hatt\bp'}(q^{-1}\opU\opP) \Psi_{\hatt\bp''}(\opU) 
\end{align}
on $V_N$, where $\omega=\omega(\hatt\bp,\hatt\br,\hatt\bp',\hatt\br',\hatt\bp'') \in \bmu_N$.
\end{thm}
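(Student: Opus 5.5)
The plan is to reduce \eqref{eq:pentagon} to the original pentagon relation of \cite[Theorems 3.2 and 3.3]{Ish}, up to a scalar, and then pin that scalar down using determinants.

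\emph{Step 1: the relation holds projectively.} Both sides of \eqref{eq:pentagon} are built only from the operators $\Psi_{\hatt\bp}(\opX)$. By the spectral definition, changing the normalization factor $c$ of a lift multiplies $\Psi_{\bp;c}(\opX)$ by a nonzero scalar. So the projective class of each side depends only on the underlying coefficients $\bp,\br,\bp',\br',\bp''$. For each cyclic operator $\opX$, the operator $\Psi_{\bp;c}(\opX)$ is determined up to scalar by the $q$-difference relations of \cref{lem:difference_eq}: the ratio of eigenvalues on consecutive eigenvectors of $\opX$ is fixed. The proof of \cite[Theorem 3.2]{Ish} uses only these difference relations and the coefficient relations \eqref{eq:pentagon_coeff_rel}, and it contains no branch choice. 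I will redo it in the spectral language.

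Let $L$ and $R$ denote the left and right sides of \eqref{eq:pentagon} without $\omega$. Conjugating $\opU$ and $\opP$ by $L$ and by $R$ and applying \cref{lem:difference_eq}, I will check that $L R^{-1}$ commutes with $\opU$ and $\opP$. Since $\opU,\opP$ generate $\mathrm{End}(V_N)$, Schur's lemma gives $L=\omega R$ for some $\omega\in\bC^\ast$. The points in $\overline{\cF}_N\setminus\cF_N$ (boundary edges) need separate treatment. There $w(0,1|k)=1$, so $\Psi_{(0,1);c}$ is a scalar and the same argument applies trivially.

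\emph{Step 2: $\omega\in\bmu_N$.} Take determinants of both sides. The operators $\opU$, $\opP$ and $q^{-1}\opU\opP$ are cyclic operators: each has simple spectrum $\bmu_N$. For $q^{-1}\opU\opP$ this holds because $(q^{-1}\opU\opP)^N=1$ and it is conjugate to a clock operator via the Weyl relation, with $q$ chosen so that $q^N=1$. By \eqref{eq:det_normalized}, each factor has determinant $1$, including the boundary lifts, where $c\in\bmu_N$ gives determinant $c^N=1$. Hence $1=\omega^N\cdot 1$, so $\omega\in\bmu_N$.

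The main obstacle is Step 1. I must verify that the adjoint-action computation of \cite{Ish} really goes through using only \cref{lem:difference_eq} and \eqref{eq:pentagon_coeff_rel}, in particular the conjugation of the mixed operator $q^{-1}\opU\opP$ and the bookkeeping of $q$-powers. If that conjugation check is messy, I will fall back on comparing matrix coefficients in the eigenbasis of $\opU$. Using \eqref{eq:w_diff_rel}, I will show that the two sides' matrix entries satisfy the same first-order recursion in the index, so they agree up to one overall constant. I would not attempt to compute $\omega$ itself, since its value genuinely depends on the chosen lifts.
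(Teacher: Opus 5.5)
Your proposal is correct and follows the paper's proof. The adjoint-action computation of \cite[Section 9.1]{Ish} uses only the corrected $q$-difference relations and the coefficient relations, and together with irreducibility of $V_N$ it gives the relation with some $\omega\in\bC^\ast$; taking determinants and using $\det\Psi_{\hatt\bp}(\opX)=1$ then forces $\omega^N=1$. Your extra checks (that $q^{-1}\opU\opP$ is a cyclic operator, and that boundary lifts have $c\in\bmu_N$ and hence determinant $1$) fill in details the paper leaves implicit.
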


\begin{proof}
The computation of the adjoint action in \cite[Section 9.1]{Ish} shows that 
\begin{align*}
    \Psi_{\hatt\bp}(\opU) \Psi_{\hatt\br'}(\opP) = \omega \Psi_{\hatt\br} (\opP) \Psi_{\hatt\bp'}(q^{-1}\opU\opP) \Psi_{\hatt\bp''}(\opU)
\end{align*}
holds for some scalar $\omega \in \bC^\ast$ by the irreducibility of the representation $V_N$. By taking the determinants of both sides and using $\det\Psi_{\hatt\bp}(\opX)=1$ for any cyclic operator, we obtain $\omega^N=1$.
\end{proof}

\begin{rem}
Of cource, the phase factor $\omega$ can be canceled by appropriately choosing any one of the lifts $\hatt\bp,\hatt\br,\hatt\bp',\hatt\br',\hatt\bp'' \in \widehat{\overline{\cF}}_N$, while keeping the others fixed. However, extending this `ad hoc' correction globally over the Ptolemy groupoid would encounter a cocycle problem.
\end{rem}

\section{Projective representation of the dotted Ptolemy groupoid}
In the previous section, we completed the necessary corrections concerning the basics of the cyclic quantum dilogarithm. Except for the necessity of lifts of coefficients to $\widehat{\cF}_N$, only the change in the resulting relations is the $\bmu_N$-factor in the pentagon relation \eqref{eq:pentagon}.  

Let $\Sigma$ be a marked surface. See \cite[Section 4.1]{Ish} for our convention. 
Let $\mathrm{Pt}_\Sigma$, $\mathrm{Pt}_\Sigma^{\tdot}$ be the associated Ptolemy and dotted Ptolemy groupoids, respectively. 
Recall that an object $\bD \in \mathrm{Pt}_\Sigma^{\tdot}$ is a dotted ideal triangulation of $\Sigma$, equipped with a numbering of its triangles. 

Recall the space $\cF_\Sigma$ of \emph{global coefficients} \cite[Definition 4.5]{Ish}, which consists of pairwise mutation-compatible collections 
\begin{align*}
    \bbp_\Sigma=\{\bp^\tri\}_{\tri \in \mathrm{Pt}_\Sigma}
\end{align*}
of coefficient tuples, where each $\bp^\tri=(\bp^\tri_\alpha)_{\alpha \in e(\tri)}$ assigns one $\bp^\tri_\alpha \in \cF_N$ for each edge $\alpha$ of the triangulation $\tri$ (where we allow $\bp^\tri_\alpha \in \overline{\cF}_N$ for boundary edges), and the mutation-compatibility conditions impose monomial relations between $\bp^\tri$ and $\bp^{\tri'}$ for each flip $\mu_\kappa: \tri \to \tri'$, which in particular ensures the relation \eqref{eq:pentagon_coeff_rel} in every pentagon in the groupoid $\mathrm{Pt}_\Sigma$.

\begin{dfn}
The space $\widehat{\cF}_\Sigma$ of \emph{extended global coefficients} consists of elements
\begin{align*}
    \hatt\bbp_\Sigma=\{\hatt\bp^\tri\}_{\tri \in \mathrm{Pt}_\Sigma},
\end{align*}
where $\hatt\bp^\tri=(\hatt\bp^\tri_\alpha)_{\alpha \in e(\tri)}$, $\hatt\bp^\tri_\alpha=(\bp^\tri_\alpha;c^\tri_\alpha) \in \widehat{\cF}_N$ for an interior edge $\alpha$ and $\hatt\bp^\tri_\alpha\in \widehat{\overline{\cF}}_N$ for a boundary edge $\alpha$, such that the underlying data $\bp^\tri=(\bp^\tri_\alpha)_{\alpha \in e(\tri)}$ define a global coefficient, and the \emph{inversion condition} 
\begin{align}\label{eq:inversion_condition}
    c_\kappa^{\tri'}c_\kappa^\tri = \zeta_\inv
\end{align}
holds for any flip $\mu_\kappa: \tri \to \tri'$. Under the first condition, the inversion condition is equivalent to $\hatt\bp_\kappa^{\tri'}=(\hatt\bp_{\kappa}^\tri)^{-1}$ in the notation \eqref{eq:involution}.
\end{dfn}
With this data, 
the flip operator for $\mu_\alpha: \bD_1 \to \bD_2$ is defined as
\begin{align*}%\label{eq:flip_op}
        \opT_{vw}:= \Psi_{\hatt\bp_{\alpha}^{\tri_1}}(\dbra{\opP_v^{-1}\opU_v\opP_w})\opS_{vw}: V_{\bD_2} \to V_{\bD_1},
\end{align*}
where $\Psi_{\hatt\bp_{\alpha}^{\tri_1}}$ is the spectral cyclic quantum dilogarithm associated with $\hatt\bp_{\alpha}^{\tri_1} \in \widehat{\cF}_N$. %Here, the state space $V_{\bD}(\hatt\bbp_\Sigma)=V_\bD$ is the same as the original construction, while we indicate the coefficient data for later discussion. 

The construction of the projective representation given in \cite[Section 4.4]{Ish} proceeds without further modification. Consequently, \cite[Theorem 2]{Ish} is corrected as follows:

\begin{prop}[cf. {\cite[Theorem 2]{Ish}}]\label{thm:proj_rep_groupoid}
For any extended global coefficient $\hatt\bbp_\Sigma=\{\hatt\bp_\tri\}_{\tri \in \mathrm{Pt}_\Sigma} \in \widehat{\cF}_\Sigma$, we have a projective representation $V_\ast(\hatt\bbp_\Sigma): \mathrm{Pt}^{\tdot}_\Sigma \to \mathrm{Vect}_\bC^\op$ such that 
\begin{enumerate}
    \item $\opA_v^3=\mathsf{1}$. 
    \item $\opT_{uv}\opT_{uw}\opT_{vw}=\omega\opT_{vw}\opT_{uv}$ for some $\omega \in \bmu_N$. 
    \item $\opA_v \opT_{vw} \opA_w =\opA_w \opT_{wv} \opA_v$.
    \item $\opT_{vw}\opA_v \opT_{wv} = \zeta \opA_v \opA_w \opP_{(12)}$, where %$\zeta =e^{\frac{\pi i}{6}(1-\frac 1N)} \in \bmu_{6N}$. 
    $\zeta=e^{\frac{\pi i}{3}(B_{L,N}-A_{L,N})} \in \bmu_{6N}$.
\end{enumerate}
\end{prop}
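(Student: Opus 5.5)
The plan is to follow the construction of \cite[Section 4.4]{Ish} without change and check that each of the four relations survives once $\Psi_\bp$ is replaced by the spectral version $\Psi_{\hatt\bp}$. The state spaces $V_\bD=V_N^{\otimes n}$, the operators $\opA_v$, $\opS_{vw}$, $\opP_{(12)}$, and the assignment of $\opT_{vw}$ to a flip stay as before. The only new ingredient is the normalization factor $c_\alpha^{\tri_1}$ attached to the flipped edge, so each relation reduces to a local statement about at most two or three tensor factors.

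The relations that do not involve the dilogarithm are immediate. Relation (1), $\opA_v^3=\mathsf{1}$, concerns only $\opA_v$ and is unchanged. Relation (3) is a symmetry of $\opS_{vw}$ together with the fact that conjugation by $\opA_v\otimes\opA_w$ carries the argument $\dbra{\opP_v^{-1}\opU_v\opP_w}$ to the corresponding argument for $\opT_{wv}$. This uses the same coefficient $\hatt\bp_\alpha^{\tri_1}$, since both sides correspond to the same flip in $\mathrm{Pt}_\Sigma$. Functional calculus commutes with conjugation, because a conjugated cyclic operator has conjugated eigenbasis and the same spectral values. So (3) follows exactly as in \cite{Ish}.

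For (2), the dotted pentagon is moved into the standard form of \cref{thm:pentagon}, using the identities $\dbra{\cdot}$ and the commutation of $\opS$ with the $\opU$, $\opP$ operators, as in \cite[Section 4.4]{Ish}. The five coefficients on the pentagon of $\mathrm{Pt}_\Sigma$ satisfy \eqref{eq:pentagon_coeff_rel} by the definition of a global coefficient. Their lifts in $\hatt\bbp_\Sigma$ are arbitrary lifts, and \cref{thm:pentagon} supplies the factor $\omega\in\bmu_N$.

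For (4), we expand $\opT_{vw}\opA_v\opT_{wv}$. The flip $\mu_\kappa\colon\tri\to\tri'$ and its inverse carry $\hatt\bp_\kappa^\tri$ and $\hatt\bp_\kappa^{\tri'}=(\hatt\bp_\kappa^\tri)^{-1}$ by the inversion condition \eqref{eq:inversion_condition}. After conjugating by $\opA_v$, the product of the two dilogarithm factors becomes $\Psi_{\hatt\bp}(\opX)\Psi_{\hatt\bp^{-1}}(\opX^{-1})$ for a single cyclic operator $\opX$. By \cref{prop:inversion} this equals $\zeta_\inv\gamma(\opX)$, where $\gamma$ is the Gauss function applied to $\opX$. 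The remaining computation, which combines $\gamma(\opX)$ with $\opS_{vw}\opA_v\opS_{wv}$ into $\opA_v\opA_w\opP_{(12)}$, is the coefficient-free Gauss-sum computation of \cite{Ish}. It yields a scalar whose product with $\zeta_\inv$ is $\zeta$.

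The main obstacle I expect is getting the exact value of $\zeta=e^{\frac{\pi i}{3}(B_{L,N}-A_{L,N})}$ for an arbitrary primitive root $q^2$. The Gauss sum $\sum_k\gamma(k)$ and the factor $\zeta_\inv=e^{-\pi iA_{L,N}}$ both depend on that choice, and the computation in \cite{Ish} covered only $q^2=e^{2\pi i/N}$. I would postpone this to the quadratic Gauss sum evaluation in \cref{sec:other_root}, presumably \cref{prop:A_gamma} together with a companion formula producing $B_{L,N}$, and then compare with \cite{Kas00}.
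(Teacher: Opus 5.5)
Your proposal is correct and follows the paper's proof: (1) and (3) are taken over from \cite{Ish}, (2) is read off from \cref{thm:pentagon} for arbitrary lifts, and (4) rests on \cref{prop:inversion} via the inversion condition \eqref{eq:inversion_condition}, giving $\zeta=\zeta_\inv(\zeta_A\zeta_-)^{-1}$. For the explicit value, the paper uses \cref{prop:zeta_value}, which evaluates $\zeta_-=\sum_k\gamma(k)^{-1}$ by the quadratic Gauss sum, invokes \cref{lem:2A+B}, and requires an appropriate choice of the cube root $\zeta_A$ (constrained only by $\zeta_A^3=(N\zeta_-)^{-1}$); the membership $\zeta\in\bmu_{6N}$ is then \cref{cor:mu_6N}.
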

For the explicit computation of $\zeta$, see \cref{sec:other_root}. 

\begin{proof}
The relation (1) is proved in \cite{Ish}. The relation (2) is a direct consequence of \cref{thm:pentagon}. The relation (3) comes solely from a symmetry of $\opS_{vw}$, which is independent of the cyclic quantum dilogarithm. 

The essential part of the relation (4) is the inversion relation (\cref{prop:inversion}), 
which is ensured by the inversion condition \eqref{eq:inversion_condition}. Then the calculation in \cite[Section 9.2.4]{Ish} is recovered, which shows $\zeta=\zeta_\inv(\zeta_A\zeta_-)^{-1}$. The explicit value of $\zeta$ is computed in \cref{prop:zeta_value}, which belongs to $\bmu_{6N}$ by \cref{cor:mu_6N}.
\end{proof}

%The construction of the local quantum trace (\cite[Section 6]{Ish}) of a mapping class is not affected by this correction, since the $\bmu_N$-ambiguity is neglected by taking the absolute value. 

%\subsection{Strict extended global coefficients}

\begin{dfn}
We say that an extended global coefficient $\hatt\bbp_\Sigma$ is \emph{strict} if the phase factor $\omega$ in the pentagon relation along any pentagon in the groupoid $\mathrm{Pt}_\Sigma^{\tdot}$ is trivial. 
\end{dfn}
Recall that we have a map $\pi_N: \cF_\Sigma \to \overline{\X}_\Sigma$ into the partially compactified cluster Poisson variety, given in coordinates by $(\bp_\alpha^\tri)_{\alpha \in e(\tri)} \mapsto (Y_\alpha^\tri)_\alpha$, $Y_\alpha^\tri:=\pi(\bp_\alpha^\tri)^N$ \cite[Corollary 4.9]{Ish}. Since $\cF_N$ fibers over $B_N$, the image of $\pi_N$ is contained in
\begin{align*}
    \overline{\X}_\Sigma^{\mathrm{reg}}:= \{ \rho \in \overline{\X}_\Sigma \mid \text{$Y_\alpha^\tri(\rho) \neq -1$ for all $\tri \in \mathrm{Pt}_\Sigma$, $\alpha \in e(\tri)$}\}.
\end{align*}
Composing with the covering $\widehat{\cF}_\Sigma \to \cF_\Sigma$ (and its extension to the partial compactifications), we obtain $\widehat{\pi}_N: \widehat{\cF}_\Sigma \to \overline{\X}_\Sigma^{\mathrm{reg}}$. Recall the positive real part $\X_\Sigma(\pos) \subset \X_\Sigma$, which consists of points $\rho$ such that $Y_\alpha^\tri(\rho)>0$ for any $\tri \in \mathrm{Pt}_\Sigma$ and $\alpha \in e(\tri)$. 

\begin{lem}\label{lem:connectivity}
The subspace $\overline{\X}_\Sigma^{\mathrm{reg}}$ contains the positive real part $\X_\Sigma(\pos)$, and is path-connected in the analytic topology.
\end{lem}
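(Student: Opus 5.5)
The first assertion is immediate. At a point $\rho \in \X_\Sigma(\pos)$, every cluster coordinate $Y_\alpha^\tri(\rho)$ is a positive real number, so none of them equals $-1$. Moreover $\X_\Sigma(\pos) \subset \X_\Sigma \subset \overline{\X}_\Sigma$. Hence $\X_\Sigma(\pos) \subset \overline{\X}_\Sigma^{\mathrm{reg}}$.

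For path-connectedness I will work chart by chart and use the positive part as a hub.

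\emph{The hub.} Fix a triangulation $\tri_0$. The coordinates $(Y_\alpha^{\tri_0})_\alpha$ identify $\X_\Sigma(\pos)$ with $\pos^{e(\tri_0)}$, which is contractible and in particular path-connected. It therefore suffices to join an arbitrary point $\rho \in \overline{\X}_\Sigma^{\mathrm{reg}}$ to $\X_\Sigma(\pos)$ by a path staying inside $\overline{\X}_\Sigma^{\mathrm{reg}}$.

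\emph{Radial paths in a single chart.} Suppose first that $\rho$ lies in the chart $(\bC^\ast)^{e(\tri)}$ for some $\tri$; the partially compactified directions for boundary edges will be handled afterwards. I would use the path
\[
Y_\alpha(t) = Y_\alpha(\rho)^{1-t},
\]
taken with a fixed branch of the logarithm in each coordinate. At $t=1$ every coordinate equals $1$, so the endpoint lies in the positive part.

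The mutation formulas
\[
Y' = Y^{\pm}\prod(1+Y_\kappa^{\pm1})^{\pm}
\]
show that $Y_\alpha^{\tri'} = -1$ cuts out a hypersurface in the chart of $\tri$. The set $\overline{\X}_\Sigma^{\mathrm{reg}}$ is therefore the complement of a countable union of complex hypersurfaces $H_\ell$, one for each pair $(\tri', \alpha)$. Each $H_\ell$ has real codimension $2$ in each chart, and the countable union is locally finite only after restricting to compact sets.

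Here a general-position argument is needed. Perturb the radial path, for instance by replacing it with $Y_\alpha(t) = Y_\alpha(\rho)^{1-t}e^{i\epsilon\theta_\alpha\phi(t)}$ for generic small parameters and a bump function $\phi$ vanishing at both ends. By transversality, a generic real $1$-dimensional path avoids each real-codimension-$2$ analytic subset, and a countable union of such exceptional parameter sets is still meagre (indeed of measure zero). The endpoints are safe automatically: $\rho$ is regular by hypothesis, and the positive part is regular by the first assertion. Hence a perturbed path exists inside $\overline{\X}_\Sigma^{\mathrm{reg}}$.

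\emph{Points not in a single chart and boundary directions.} Since $\overline{\X}_\Sigma$ is covered by its charts (and their partial compactifications), any $\rho$ lies in some chart. In the partially compactified directions, where the boundary coordinates may equal $0$, I would first move the zero boundary coordinates off $0$ into small positive values. Near $0$ the condition $Y \neq -1$ is harmless, but the mutated coordinates must still be checked. The same transversality argument applies because the boundary locus is again a complex analytic subset.

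\emph{Main obstacle.} The delicate step is the genericity claim. One must verify that each bad set $\{Y_\alpha^{\tri'} = -1\}$, pulled back to a fixed chart, is a proper analytic subset: it must be nonempty-interior-free and of complex codimension $1$, rather than containing a whole region. This follows since each $Y_\alpha^{\tri'}$ is a nonconstant rational function, for example because it takes positive values on $\X_\Sigma(\pos)$. One must also make sure that countably many such subsets cannot block all paths. I would settle the latter by a measure-zero argument in the finite-dimensional parameter space of perturbations.
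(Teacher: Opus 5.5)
Your proposal follows the same route as the paper. In a single chart, the non-regular locus is a countable union of proper analytic subsets, proper because each $Y^{\tri'}_{\alpha'}$ is positive on $\X_\Sigma(\pos)$. Its complement is path-connected, which joins $\rho$ to the contractible positive part. Your measure-zero perturbation argument is a more explicit justification of the countable-union step, which the paper only asserts after citing the single-analytic-set case.

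The one step that does not work as written is your sample family $Y_\alpha(\rho)^{1-t}e^{i\epsilon\theta_\alpha\phi(t)}$ with real $\theta_\alpha$. It only rotates phases, since the moduli $|Y_\alpha(\rho)|^{1-t}$ do not depend on $\theta$. So the family is not submersive and parametric transversality does not apply. For example, if all $|Y_\alpha(\rho)|=1$, every path lies in the real torus $\{|Y_\alpha|=1\}$. There $\{Y_\kappa=-1\}$ is a real hypersurface, and every path of the family crosses it as soon as the unperturbed one does, which happens for a non-principal branch of $\log Y_\kappa(\rho)$.

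The fix is to use complex parameters: $Y_\alpha(\rho)^{1-t}\exp(\phi(t)s_\alpha)$ with $s_\alpha\in\bC$ and $\phi>0$ on $(0,1)$. Then $(s,t)\mapsto\gamma_s(t)$ is a submersion for $0<t<1$, and your null-set argument goes through.
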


\begin{proof}
The inclusion $\X_\Sigma(\pos) \subset \overline{\X}_\Sigma^{\mathrm{reg}}$ is obvious. 
Since any cluster chart provides a homeomorphism $\X_\Sigma(\pos) \cong \bR_{>0}^{|e(\tri)|}$, the positive real part is contractible. 

The ambient space $\overline{\X}_\Sigma$ is the union of complex tori $T^\tri \cong (\bC^\ast)^{|e_{\interior}(\tri)|} \times \bC^{|e(\tri) \setminus e_{\interior}(\tri)|}$ associated with $\tri \in \mathrm{Pt}_\Sigma$.\footnote{More precisely, we need the tori associated with tagged triangulations \cite{FST}. The subsequent argument is similar for these tori.}  
Let $\rho \in \overline{\X}_\Sigma^{\mathrm{reg}} \cap T^\tri$ for some $\tri \in \mathrm{Pt}_\Sigma$. 
Observe that the condition $Y_{\alpha'}^{\tri'}=-1$ for $\tri' \in \mathrm{Pt}_\Sigma$ and $\alpha' \in e(\tri')$ defines a proper analytic subset in $T^\tri$. Therefore the complement $D:=(\overline{\X}_\Sigma \setminus \overline{\X}_\Sigma^{\mathrm{reg}}) \cap T^\tri$ is a countable union of proper analytic subsets. 

It is well-known that the complement of a proper analytic subset of a connected complex manifold is path-connected (see, for instance, \cite[Theorem 2.7]{Adamus}). The same argument applies to the complement of a countable union of proper analytic subsets. 
Therefore, every point of $\overline{\X}_\Sigma^{\mathrm{reg}} \cap T^\tri$ can be connected by paths avoiding $D$. In particular, $\rho$ can be connected to a point in $\X_\Sigma(\pos)$. Since this holds for every $\tri \in \mathrm{Pt}_\Sigma$ and $\X_\Sigma(\pos)$ is path-connected, the assertion is proved.
\end{proof}

Consider the subset $\widehat{\cF}_\Sigma^{\mathrm{st}} \subset \widehat{\cF}_\Sigma$ of strict extended global coefficients. 

\begin{thm}\label{thm:strict_coeff_exist}
For any marked surface $\Sigma$, the map
\begin{align*}
    \widehat{\pi}_N: \widehat{\cF}_\Sigma^{\mathrm{st}} \to \overline{\X}_\Sigma^{\mathrm{reg}}
\end{align*}
is surjective. 
\end{thm}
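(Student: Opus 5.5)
The plan is a continuity-plus-normalization argument. The phase $\omega\in\bmu_N$ in \cref{thm:pentagon} is defined by an operator identity whose two sides depend continuously on the five lifts, and $\omega$ takes values in a discrete set. Hence $\omega$ is locally constant along any continuous family of lifts. We therefore find one configuration where $\omega=1$ can be verified directly, and propagate strictness along paths, using \cref{lem:connectivity}.

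\textbf{Step 1 (local pentagon, base case).} For a single pentagon with coefficients satisfying \eqref{eq:pentagon_coeff_rel}, degenerate to the boundary point where $\bp=(0,1)$, or more generally where the relevant coefficients lie in $\overline{\cF}_N\setminus\cF_N$. There $\Psi_{(0,1);c}(\opX)=c\cdot\mathsf{1}$, and the original normalization argument of \cite[Section 9.1]{Ish} becomes available. It shows that for lifts with $c=1$ at the boundary point, and the induced lifts of the others, $\omega=1$. Since $\omega$ is locally constant on the connected space of lifted pentagon data through such a point, it is natural to define the lifts along a path and transport them.

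\textbf{Step 2 (construct a global lift over one point).} Pick a base point $\rho_0\in\X_\Sigma(\pos)$, or better a boundary-type point of $\overline{\X}_\Sigma^{\mathrm{reg}}$ where many $Y$'s vanish. Over the positive real part, all $y_\alpha^\tri>0$ lie in a simply-connected region of $B_N$ avoiding the $b_j$, and $A(y)$ has a distinguished branch of $A^{-1/N}$ that is positive on $\bR_{>0}$. Choose the real positive lifts $\bp_\alpha^\tri=s(y)$ and $c_\alpha^\tri=A(y)^{-1/N}>0$. We then check two things. First, the inversion condition \eqref{eq:inversion_condition} holds; this is exactly what the normalization $\zeta_\inv$ of \cref{prop:positive_normalization} is designed for. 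Second, every pentagon has $\omega=1$. For this, connect each pentagon's data within the positive real locus (which is contractible) to the degenerate configuration of Step 1, keeping the positive branch continuously. Then $\omega$ stays constant and equals $1$.

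\textbf{Step 3 (propagate).} For an arbitrary $\rho\in\overline{\X}_\Sigma^{\mathrm{reg}}$, take a path from $\rho_0$ to $\rho$ inside $\overline{\X}_\Sigma^{\mathrm{reg}}$ by \cref{lem:connectivity}. Lift it through the coverings $\widehat{\cF}_\Sigma\to\cF_\Sigma\to\overline{\X}_\Sigma^{\mathrm{reg}}$ by path lifting, edge by edge. This is possible because each factor is a covering over the regular locus: the $N$-th root $y\mapsto Y$ is unramified away from $0$, the Fermat covering by \cref{lem:monodromy_F}, and the $c$-covering by \cref{lem:monodromy_hatF}. At boundary or zero coordinates, use the local triviality at $(0,1)$. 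The monomial mutation relations, and the relation $c^{\tri'}c^{\tri}=\zeta_\inv$, are closed conditions that hold at $t=0$ and are preserved by continuous lifting, since their failure would be a discrete jump. Each $\omega$ is likewise locally constant, so it remains $1$. The endpoint is a strict extended global coefficient over $\rho$.

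\textbf{Main obstacle.} The main obstacle is Step 3's path lifting for infinitely many triangulations simultaneously. The lifts must be chosen coherently so that the mutation-compatibility constraints, which are only determined up to $\bmu_N$ from the $Y$'s, are maintained. I would handle this by lifting along the path only the data at a single triangulation $\tri_0$, then defining all other $\bp^\tri$ and $c^\tri$ via the mutation rules. The remaining thing to check is that the mutation rules are consistent around cycles of $\mathrm{Pt}_\Sigma$. This consistency holds at $\rho_0$ and persists by continuity and discreteness.
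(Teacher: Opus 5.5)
Your overall architecture matches the paper's proof: a canonical lift over $\X_\Sigma(\pos)$, strictness of each pentagon by degenerating to boundary coefficients where the dilogarithm factors become $\mathsf{1}$, and continuation along a path in $\overline{\X}_\Sigma^{\mathrm{reg}}$, using that every relation and every $\omega$ is a continuous $\bmu_N$-valued quantity.

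\textbf{The base lift in Step 2 is wrong as stated.} For $y>0$ the function $A(y)=\prod_{j=1}^N(1+q^{2j-1}y)^{M-j}$ is not positive. The factors for $j$ and $N+1-j$ are complex conjugates with opposite exponents, so $|A(y)|=1$, and $A$ is genuinely non-real. For example, with $N=3$ and $q^2=e^{2\pi i/3}$ one gets $A(y)=(1+qy)/(1+q^2y)\to e^{2\pi i/3}$ as $y\to+\infty$. Hence no branch with $c_\alpha^\tri=A(y)^{-1/N}>0$ exists. A positive normalization could not satisfy the inversion condition anyway: $c(Y)c(Y^{-1})$ would be positive, whereas $\zeta_\inv=e^{-\pi i A_{L,N}}$ is not (it equals $e^{4\pi i/9}$ in the example).

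The lift that works is the paper's $c(Y)=\exp(-L(Y)/N)$, where $L$ is the continuous logarithm of $A(s(Y))$ along $[0,\infty)$ normalized by $L(0)=0$. This unit-modulus branch is what reconciles $c(0)=1$ at the degenerate point (your Step 1) with $c(Y)c(Y^{-1})=\zeta_\inv$. \cref{prop:positive_normalization} proves the latter for exactly this branch, by computing $\lim_{Y\to+\infty}c(Y)=\zeta_\inv$.

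You should also state explicitly that the lift is a function of $Y$ alone. At the degeneration $Z=0$ one has $(Y',Z',Y'')=(0,0,Y)$. Then $\omega=1$ follows because $\hatt\bp$ and $\hatt\bp''$ are literally the same point of $\widehat{\cF}_N$, and the other three factors are $\mathsf{1}$.

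\textbf{Your ``main obstacle'' is not an obstacle, and the proposed remedy is ill-posed.} Mutation-compatibility only fixes the ratio $p^+/p^-$ on non-flipped edges, so it determines $\bp_\alpha^{\tri'}$ only up to the $\bmu_N$ deck action. The inversion condition constrains only $c_\kappa$ on the flipped edge. So the data at $\tri_0$ do not determine the data at other triangulations.

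The paper instead continues every datum $(\bp_\alpha^\tri(t),c_\alpha^\tri(t))$ independently along a single path, which is what your Step 3 already does ``edge by edge''. Each mutation relation, each inversion condition and each $\omega$ involves only finitely many data and is a continuous $\bmu_N$-valued function equal to $1$ at $t=0$. So no coherence problem across infinitely many triangulations arises.

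Finally, take the path with $\rho_t\in\X_\Sigma^{\mathrm{reg}}$ for $t<1$, as the paper does. Continuing through a vanishing coordinate in the middle of the path is not covered by local triviality at $(0,1)$: $Y\mapsto y$ ramifies at $0$, and the mutation ratios become $0/0$ there.
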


\begin{proof}
Define a continuous map $s:[0,\infty) \to \overline{\cF}_N$ by
\begin{align*}
    s(Y):=\bigg( \left(\frac{Y}{1+Y}\right)^{\!1/N}, \left(\frac{1}{1+Y}\right)^{\!1/N} \bigg),
\end{align*}
where we choose the positive $N$-th roots. Then $Y \mapsto A(s(Y))$ is a continuous curve in $\bC^\ast$ with $A(s(0))=1$. Take the branch of logarithm $L(Y)=\log A(s(Y))$ along this curve with $L(0)=0$, and define
\begin{align}\label{eq:c_positive}
    c(Y):=\exp\left(-\frac{L(Y)}{N} \right).
\end{align}
Then we have $c(Y)^N=A(s(Y))^{-1}$, and hence $(s(Y);c(Y)) \in \widehat{\overline{\cF}}_N$ for $Y \geq 0$. 

\paragraph{\textbf{Lifts of positive real points.}}
Take a positive real point $\rho \in \X_\Sigma(\pos)$.
For each $\tri \in \mathrm{Pt}_\Sigma$ and $\alpha \in e(\tri)$, let $Y_\alpha^\tri=Y_\alpha^\tri(\rho) >0$ be the associated cross ratio coordinate. Then the above construction provides $\widehat{\bp}_\alpha^\tri:=(s(Y_\alpha^\tri),c(Y_\alpha^\tri)) \in \widehat{\cF}_N$. These data form an extended global coefficient, as follows. 
Let $y_\alpha^\tri:=(Y_\alpha^\tri)^{1/N}$ be the positive $N$-th root.

Recall \cite[Lemma 4.8 and Corollary 4.9]{Ish}. When considering a single flip, we will suppress triangulation labels from the notation. The parameters $Y_\alpha^\tri=Y_\alpha^\tri(\rho)$ satisfy the cluster Poisson transformation
\begin{align}\label{eq:mutation_Y}
    Y'_\alpha = \begin{cases}
        Y_\kappa^{-1} & \mbox{if $\alpha=\kappa$}, \\
        Y_\alpha(1+Y_\kappa^{-\sgn(\ve_{\alpha\kappa})})^{-\ve_{\alpha\kappa}} & \mbox{if $\alpha \neq \kappa$}.
    \end{cases}
\end{align}
Then their positive $N$-th roots satisfy
\begin{align}\label{eq:mutation_y}
    y'_\alpha = \begin{cases}
        y_\kappa^{-1} & \mbox{if $\alpha=\kappa$}, \\
        y_\alpha(1+y_\kappa^{-N\sgn(\ve_{\alpha\kappa})})^{-\ve_{\alpha\kappa}/N} & \mbox{if $\alpha \neq \kappa$}.
    \end{cases}
\end{align}
Since $s(Y_\alpha^\tri)^+/s(Y_\alpha^\tri)^- = y_\alpha^\tri$, this implies the mutation compatibility relation for $s(Y_\alpha^\tri)$.
The inversion condition \eqref{eq:inversion_condition} holds by \cref{prop:positive_normalization}. Hence we obtain an extended global coefficient $\{\hatt\bp^\tri\}_{\tri\in \mathrm{Pt}_\Sigma}$.

With this choice, the evaluation argument in \cite[Section 9.1.1]{Ish} can be recovered as follows.
We have
\begin{align}\label{eq:CQL_op_at0}
    \Psi_{(s(0);c(0))}(\opX) =\Psi_{(0,1);1}(\opX)=\mathsf{1}
\end{align}
for any cyclic operator $\opX \in \mathrm{End}(V_N)$. %Indeed, this follows from $c(0)=1$ and $\lim_{Y \to +0}w(s(Y)|k)=1$ for all $k \in \bZ_N$.

Consider a pentagon in $\mathrm{Pt}_\Sigma^{\tdot}$ as shown in \cref{fig:pentagon_proof}, where $(Y,Z,Y',Z',Y'') \in \bR_{\geq 0}^5$ are the cross ratio parameters of $\rho$ assigned to the corresponding edges, which are related as
\begin{align*}
    Y'=Y(1+Z^{-1})^{-1}, \quad Z'=Z(1+Y), \quad Y''=Y(1+Z')^{-1}
\end{align*}
by the cluster Poisson transformation. 
Let 
\begin{align*}
    (\hatt\bp,\hatt\br,\hatt\bp',\hatt\br',\hatt\bp''):=((s(Y);c(Y)),(s(Z);c(Z)),(s(Y');c(Y')),(s(Z');c(Z')),(s(Y'');c(Y''))),
\end{align*}
which therefore satisfies the mutation-compatible relation in \eqref{eq:pentagon_coeff_rel}.
The phase factor $\omega=\omega(\hatt\bp,\hatt\br,\hatt\bp',\hatt\br',\hatt\bp'') \in \bmu_N$ is a continuous function of $Y$ and $Z$, and hence a constant. 
%The limit $Z \to +0$
The evaluation at $Z=0$ with $Y>0$ produces
\begin{align*}
    (Y',Z',Y'') = (0,0,Y).
\end{align*}
%The evaluation at this point shows $\omega=1$. 
By evaluating the pentagon relation \eqref{eq:pentagon} at this point, we obtain $\Psi_{\hatt\bp}(\opU)=\omega \Psi_{\hatt\bp''}(\opU)$ by \eqref{eq:CQL_op_at0}, and hence $\omega=1$.
Therefore, $\omega=1$ holds for any $Y,Z>0$. This shows that
any positive real point $\rho \in \X_\Sigma(\pos)$ lifts to a strict extended global coefficient.

\begin{figure}[ht]
    \centering
\begin{tikzpicture}[scale=1.2]
\foreach \i in {0,1,2,3,4} {
\draw(\i*72+90:3) coordinate(A\i);
}
\draw[->,shorten >=1.44cm, shorten <=1.44cm] (A0) -- (A1);
\draw[->,shorten >=1.44cm, shorten <=1.44cm] (A1) -- (A2);
\draw[->,shorten >=1.44cm, shorten <=1.44cm] (A0) -- (A4);
\draw[->,shorten >=1.44cm, shorten <=1.44cm] (A4) -- (A3);
\draw[->,shorten >=1.44cm, shorten <=1.44cm] (A3) -- (A2);
\draw(A0) node[fill=white,circle,inner sep=0.1pt]{
\tikz[scale=0.36]{
\foreach \i in {0,1,2,3,4} {
\fill(\i*72+90:3) circle(1.3pt) coordinate(B\i);
\draw[blue] (\i*72+90:3) -- (\i*72+90+72:3);
\draw(72+90:2.5) node[scale=0.5]{$\ast$};
\draw(144+90:2.5)++(0.3,0.2) node[scale=0.5]{$\ast$};
\draw(216+90:2.5)++(0.25,0.5) node[scale=0.5]{$\ast$};
}
\draw[blue] (B0)--(B2);
\draw[blue] (B0)--(B3);
% \node[scale=0.8] at (-2,0){$u$};
% \node[scale=0.8] at (0,-0.5){$v$};
% \node[scale=0.8] at (2,0){$w$};
\node[blue,scale=0.8] at (1.1,1){$Y$};
\node[blue,scale=0.8] at (-1.1,1){$Z$};
}};
\draw(A1) node[fill=white,circle,inner sep=0.1pt]{
\tikz[scale=0.36]{
\foreach \i in {0,1,2,3,4} {
\fill(\i*72+90:3) circle(1.3pt) coordinate(B\i);
\draw[blue] (\i*72+90:3) -- (\i*72+90+72:3);
\draw(72+90:2.5) node[scale=0.5]{$\ast$};
\draw(144+90:2.5)++(0.3,0.3) node[scale=0.5]{$\ast$};
\draw(216+90:2.5) node[scale=0.5]{$\ast$};
}
\draw[blue] (B0)--(B2);
\draw[blue] (B2)--(B4);
% \node[scale=0.8] at (-2,0){$u$};
% \node[scale=0.8] at (0,0){$v$};
% \node[scale=0.8] at (0.5,-1.8){$w$};
\node[blue,scale=0.8] at (1.5,0.5){$Y^{-1}$};
\node[blue,scale=0.8] at (-1.1,1){$Z'$};
}};
\draw(A2) node[fill=white,circle,inner sep=0.1pt]{
\tikz[scale=0.36]{
\foreach \i in {0,1,2,3,4} {
\fill(\i*72+90:3) circle(1.3pt) coordinate(B\i);
\draw[blue] (\i*72+90:3) -- (\i*72+90+72:3);
\draw(72+90:2.5)++(0.5,0.35) node[scale=0.5]{$\ast$};
\draw(144+90:2.5)++(-0.2,0.1) node[scale=0.5]{$\ast$};
\draw(216+90:2.5) node[scale=0.5]{$\ast$};
}
\draw[blue] (B1)--(B4);
\draw[blue] (B2)--(B4);
% \node[scale=0.8] at (0,1.5){$u$};
% \node[scale=0.8] at (-1,-0.5){$v$};
% \node[scale=0.8] at (0.5,-1.8){$w$};
\node[blue,scale=0.8] at (1,0){${Y''}^{-1}$};
\node[blue,scale=0.8] at (-1.1,0.4){${Z'}^{-1}$};
}};
\draw(A3) node[fill=white,circle,inner sep=0.1pt]{
\tikz[scale=0.36]{
\foreach \i in {0,1,2,3,4} {
\fill(\i*72+90:3) circle(1.3pt) coordinate(B\i);
\draw[blue] (\i*72+90:3) -- (\i*72+90+72:3);
\draw(72+90:2.5)++(0.5,0.4) node[scale=0.5]{$\ast$};
\draw(144+90:2.5) node[scale=0.5]{$\ast$};
\draw(216+90:2.5)++(0.2,0.3) node[scale=0.5]{$\ast$};
}
\draw[blue] (B1)--(B3);
\draw[blue] (B1)--(B4);
% \node[scale=0.8] at (0,1.5){$u$};
% \node[scale=0.8] at (-1.5,-1){$v$};
% \node[scale=0.8] at (-0.5,0){$w$};
\node[blue,scale=0.8] at (1,-1){$Y''$};
\node[blue,scale=0.8] at (1.5,0.4){${Y'}^{-1}$};
}};
\draw(A4) node[fill=white,circle,inner sep=0.1pt]{
\tikz[scale=0.36]{
\foreach \i in {0,1,2,3,4} {
\fill(\i*72+90:3) circle(1.3pt) coordinate(B\i);
\draw[blue] (\i*72+90:3) -- (\i*72+90+72:3);
\draw(72+90:2.5)++(0.2,0.1) node[scale=0.5]{$\ast$};
\draw(144+90:2.5) node[scale=0.5]{$\ast$};
\draw(216+90:2.5)++(0.25,0.5) node[scale=0.5]{$\ast$};
}
\draw[blue] (B1)--(B3);
\draw[blue] (B0)--(B3);
% \node[scale=0.8] at (-0.5,1){$u$};
% \node[scale=0.8] at (-1.5,-1){$v$};
% \node[scale=0.8] at (2,0){$w$};
\node[blue,scale=0.8] at (1.1,1){$Y'$};
\node[blue,scale=0.8] at (-0.5,0){$Z^{-1}$};
}};
\end{tikzpicture}
    \caption{A pentagon in $\mathrm{Pt}_\Sigma^{\tdot}$}
    \label{fig:pentagon_proof}
\end{figure}

\paragraph{\textbf{Analytic continuation.}} By \cref{lem:connectivity}, $\overline{\X}_\Sigma^{\mathrm{reg}}$ is path-connected in the analytic topology, and moreover contains $\X_\Sigma(\pos)$. For any point $\rho \in \overline{\X}_\Sigma^{\mathrm{reg}}$, choose a continuous path $\rho_t$ in $\overline{\X}_\Sigma^{\mathrm{reg}}$ such that $\rho_0 \in \X_\Sigma(\pos)$, $\rho_t \in \X_\Sigma^{\mathrm{reg}}$ for $0 \leq t < 1$, and $\rho_1=\rho$. Let $Y_\alpha^\tri(t):=Y_\alpha^\tri(\rho_t) \in \bC \setminus \{-1\}$. 
Then we get the analytic continuations
\begin{align*}
    %\hatt\bp_\alpha^\tri(t) :=(s(Y_\alpha^\tri(\rho_t)),c(Y_\alpha^\tri(\rho_t))), \quad 
    &y_\alpha^\tri(t)=Y_\alpha^\tri(t)^{1/N}, \\
    &\bp_\alpha^\tri(t)=\bigg(\bigg(\frac{Y_\alpha^\tri(t)}{1+Y_\alpha^\tri(t)} \bigg)^{\!1/N}, \bigg(\frac{1}{1+Y_\alpha^\tri(t)} \bigg)^{\!1/N} \bigg), \\
    &c_\alpha^\tri(t)=A(\bp_\alpha^\tri(t))^{-1/N}
\end{align*}
along this path from the initial values at $t=0$. The parameters $Y_\alpha^\tri(t)$ satisfy the cluster Poisson transformation \eqref{eq:mutation_Y}. By taking the $N$-th roots, we see that the ratio
\begin{align*}
    % \frac{y'_\alpha(t)}{y_\alpha(t)(1+y_\kappa(t)^{-N\sgn(\ve_{\alpha\kappa})})^{-\ve_{\alpha\kappa}/N}}
    \frac{y'_\alpha(t)}{y_\alpha(t)(p_\kappa^\pm(t))^{\ve_{\alpha\kappa}}}
\end{align*}
for $\alpha\neq \kappa$ takes values in $\bmu_N$, and is continuous. Since it equals $1$ at $t=0$, it is constantly $1$. The case $\alpha=\kappa$ is treated similarly. 
Therefore the mutation-compatibility holds at $t=1$ as well. Note that this argument is also valid for the boundary case $\rho \in \overline{\X}_\Sigma^{\mathrm{reg}} \setminus \X_\Sigma^{\mathrm{reg}}$ by continuity. 
In order to see the inversion condition \eqref{eq:inversion_condition}, we first note 
\begin{align}\label{eq:inversion_continued}
    c_\alpha^\tri(t)c_\alpha^{\tri'}(t)\zeta_\inv^{-1} \in \bmu_N.
\end{align}
Indeed, by the mutation-compatibility of the underlying global coefficient and the proof of \cref{lem:involution}, we have
\begin{align*}
    (c_\alpha^\tri(t)c_\alpha^{\tri'}(t))^N= A(\bp_\alpha^\tri(t))^{-1}A(\bp_\alpha^{\tri'}(t))^{-1} = \zeta_\inv^N.
\end{align*}
Since the left-hand side of \eqref{eq:inversion_continued} is continuous for $t$ and equals $1$ at $t=0$, we have $c_\alpha^\tri(t)c_\alpha^{\tri'}(t)=\zeta_\inv$ for all $t$, especially at $t=1$. 

Thus the parameters $\hatt\bp_\alpha^\tri(t)$ form an extended global coefficient. The phase factor $\omega=\omega(t) \in \bmu_N$ also varies continuously, and equals to $1$ at $t=0$. 
Hence we obtain a strict extended global coefficient as a lift of $\rho$.
\end{proof}

% \begin{rem}
% The limit argument $Y \to +0$ can be made algebraic by considering a partial compactification $\overline{\cF}_N \to B_N \cup \{0\}$ as in \cite{Ish}, and the corresponding extended version.
% \end{rem}

\begin{cor}
For any $\hatt\bbp_\Sigma \in \widehat{\cF}_\Sigma^{\mathrm{st}}$, the pentagon relation in \cite[Theorem 1]{Ish} and the operator identities in \cite[Theorem 2]{Ish} hold true. 
\end{cor}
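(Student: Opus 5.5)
The corollary should follow almost formally from \cref{thm:proj_rep_groupoid} together with the definition of strictness. I would begin by fixing $\hatt\bbp_\Sigma \in \widehat{\cF}_\Sigma^{\mathrm{st}}$ and taking the projective representation $V_\ast(\hatt\bbp_\Sigma)$ of \cref{thm:proj_rep_groupoid}.

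For the operator identities of \cite[Theorem 2]{Ish}, relations (1) and (3) hold verbatim. Relation (4) holds with the same scalar $\zeta$: the inversion condition \eqref{eq:inversion_condition} is part of the definition of $\widehat{\cF}_\Sigma$, so \cref{prop:inversion} applies. For relation (2), strictness says precisely that the $\bmu_N$-factor $\omega$ is $1$ along every pentagon cycle. Hence $\opT_{uv}\opT_{uw}\opT_{vw}=\opT_{vw}\opT_{uv}$ exactly, as in the original statement.

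For the pentagon relation of \cite[Theorem 1]{Ish}, I would realize the relation \eqref{eq:pentagon} as the local form of relation (2). Concretely, the data $(\hatt\bp,\hatt\br,\hatt\bp',\hatt\br',\hatt\bp'')$ attached to the edges of a pentagon in $\mathrm{Pt}_\Sigma^{\tdot}$ (as in \cref{fig:pentagon_proof}) satisfy \eqref{eq:pentagon_coeff_rel}. The computation in \cite[Section 9.1]{Ish} identifies the $\omega$ of \cref{thm:pentagon} with the $\omega$ of \cref{thm:proj_rep_groupoid}(2). This holds because the $\opS_{vw}$ factors satisfy their own pentagon identity exactly, and conjugating by them transports $\dbra{\opP_v^{-1}\opU_v\opP_w}$ to $\opU,\opP,q^{-1}\opU\opP$. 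Therefore $\omega=1$, and \eqref{eq:pentagon} holds with no phase for these lifts.

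The main obstacle is the scope of Theorem 1, which is stated for arbitrary coefficients satisfying \eqref{eq:pentagon_coeff_rel}, not only for those arising from a surface. The corollary should therefore be read as applying to quintuples coming from pentagons of $\Sigma$. If a standalone statement is wanted, I would embed any quintuple in the once-flipped pentagon (a disk with five marked points, where the relevant parameters are arbitrary) and invoke \cref{thm:strict_coeff_exist} there. Alternatively, one can repeat its continuation argument directly: $\omega$ is continuous and $\bmu_N$-valued, it equals $1$ at the degenerate point $Z=0$ by \eqref{eq:CQL_op_at0}, and path-connectedness then propagates this to the whole regular locus.
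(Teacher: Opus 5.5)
Your proposal is correct and matches the paper, where the corollary is immediate from \cref{thm:proj_rep_groupoid} and the definition of strictness; as in the proof of \cref{thm:strict_coeff_exist}, the $\omega$ of relation (2) is identified with the $\omega$ of \eqref{eq:pentagon} for the quintuple attached to each pentagon. Drop the ``standalone'' extension in your last paragraph: \cref{thm:strict_coeff_exist} and its continuation argument only produce \emph{particular} lifts with $\omega=1$, whereas for arbitrary lifts $\omega$ genuinely varies (replacing $c$ by $\eta c$ with $\eta\in\bmu_N$ in one lift multiplies $\omega$ by $\eta^{\pm1}$, cf.\ the remark after \cref{thm:pentagon}), so the phase-free relation holds only for quintuples coming from a strict coefficient, exactly as the corollary states.
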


%\appendix

\section{Phase factors \texorpdfstring{$A_{L,N}$ and $B_{L,N}$}{A(L,N) and B(L,N)}}\label{sec:other_root}
Let $N \geq 3$ be an odd integer, and $0 \leq a \leq N-1$ be such that $\gcd(a,N)=1$. Consider the primitive $N$-th root of unity 
\begin{align*}
    q^2=e^{2\pi ia/N}.%, \quad q:=(q^2)^{(N+1)/2}
\end{align*}
Define
\begin{align*}
    L:=\begin{cases}
        N-a & \mbox{if $a$ is odd}, \\
        -a & \mbox{if $a$ is even}.
    \end{cases}
\end{align*}
Then $L$ is even, and 
\begin{align*}
    q:=(q^2)^{(N+1)/2} = e^{-\pi i L/N}.
\end{align*}
Our $L/N$ corresponds to Kashaev's $M/N$ \cite{Kas00}. In this section, we investigate the constants $A_{L,N}$ and $B_{L,N}$ introduced in \cite[p.580]{Kas00}. Following \cite{Kas00}, let $x=y[x]_y + \{x\}_y$ for $x \in \bR$ and $y \in \bZ_{>0}$, where $[x]_y$ and $\{x\}_y$ are the quotient and the remainder upon division of $x$ by $y$. For instance, $\{x\}_1$ is the fractional part of $x$, and $\{[x]_1\}_2$ is the parity of the integer part of $x$. Then, Kashaev's constants are
\begin{align*}
    A_{L,N} &:= \frac{1}{N} \sum_{k=1}^{N-1} k \bigg( \bigg\{\frac{(2k+1)L}{N}\bigg\}_1 - \bigg\{\bigg[\frac{(2k+1)L}{N}\bigg]_1\bigg\}_2 \bigg), \\
    B_{L,N}&:= \frac{1}{N} \sum_{k=1}^{N-1} k \bigg( \bigg\{\frac{2kL}{N}\bigg\}_1 - \bigg\{\bigg[\frac{2kL}{N}\bigg]_1\bigg\}_2 -1\bigg).
\end{align*}
Let us first rewrite them as follows. 
Write $N=2n+1$ and $L=2\ell$. For an integer $x$, 
let 
\begin{itemize}
    \item $\{x\}_N \in \{0,\dots,N-1\}$ denote the residue of $x$ mod $N$.
    \item $\langle x\rangle_N \in \{-n,\dots,n\}$ denote the residue of $x$ mod $N$. 
\end{itemize}
These two residues are related by
\begin{align}\label{eq:residue_relation}
    \langle x \rangle_N = \{2x\}_N - \{x\}_N.
\end{align}
Let $c_k:=\langle (2k+1)\ell \rangle_N$, $d_k:=\{2k\ell\}_N$ for $k=1,\dots,N-1$. 
Using the elementary properties
\begin{align*}
    \left\{\frac{2x}{N} \right\}_1 - \left\{\left[\frac{2x}{N} \right]_1 \right\}_2 = \frac{2\langle x \rangle_N}{N}, \qquad \left\{\frac{2x}{N} \right\}_1 + \left\{\left[\frac{2x}{N} \right]_1 \right\}_2 = \frac{2\{ x \}_N}{N}
\end{align*}
for any $x \in \bZ$, we can rewrite Kashaev's constants as
\begin{align*}
    A_{L,N} &= \frac{1}{N}\sum_{k=1}^{N-1} k \frac{2\langle (2k+1)\ell\rangle_N}{N} = \frac{2}{N^2}\sum_{k=1}^{N-1} kc_k, \\
    B_{L,N} &= \frac{1}{N}\sum_{k=1}^{N-1} k \bigg(\frac{2\{ 2k\ell\}_N}{N}-1 \bigg) = \frac{1}{N}\sum_{k=1}^{N-1} k \bigg(\frac{2d_k}{N}-1 \bigg).
\end{align*}

\begin{lem}\label{prop:A_gamma}
We have the relation
\begin{align*}
    \prod_{k \in \bZ_N} \gamma(k)= e^{\pi i N A_{L,N}}.
    %\zeta_\inv^N = e^{-\pi i N A_{L,N}}, \quad %\zeta=e^{\frac{\pi i}{3}(B_{L,N}-A_{L,N})}
\end{align*}
%under a suitable choice of branches of roots.
\end{lem}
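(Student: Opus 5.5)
The plan is to reduce both sides to powers of the primitive root $e^{2\pi i/N}$ and then compare exponents modulo $N$. Both sides are $N$-th roots of unity, and only integer congruences mod $N$ are needed.

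\textbf{Left-hand side.} Since $q^N=1$, the value $\gamma(k)=q^{k^2}$ depends only on $k \bmod N$. I will therefore take the representatives $k=0,\dots,N-1$ and write
\begin{align*}
\prod_{k\in\bZ_N}\gamma(k)=q^{S},\qquad S:=\sum_{k=0}^{N-1}k^2=\frac{(N-1)N(2N-1)}{6}\in\bZ .
\end{align*}
Using $q=e^{-\pi i L/N}$ with $L=2\ell$, this becomes
\begin{align*}
q^{S}=\exp\Bigl(-\frac{2\pi i\,\ell S}{N}\Bigr).
\end{align*}

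\textbf{Right-hand side.} Using the rewritten form $A_{L,N}=\frac{2}{N^2}\sum_{k=1}^{N-1}kc_k$, where $c_k=\langle(2k+1)\ell\rangle_N$ is an integer, I get
\begin{align*}
e^{\pi i N A_{L,N}}=\exp\Bigl(\frac{2\pi i}{N}\sum_{k=1}^{N-1}kc_k\Bigr).
\end{align*}

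\textbf{Reducing to a congruence.} The identity now amounts to the congruence
\begin{align*}
X:=\sum_{k=1}^{N-1}kc_k\equiv -\ell S \pmod N .
\end{align*}
Since $c_k\equiv(2k+1)\ell \pmod N$, we have
\begin{align*}
X\equiv \ell\sum_{k=1}^{N-1}k(2k+1)=\ell\bigl(2S+\tfrac{N(N-1)}{2}\bigr).
\end{align*}
Because $N$ is odd, $\frac{N(N-1)}{2}\equiv 0 \pmod N$, so $X\equiv 2\ell S \pmod N$. It therefore remains to show $3\ell S\equiv 0 \pmod N$.

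\textbf{The divisibility step.} We have
\begin{align*}
3S=N\cdot\frac{(N-1)(2N-1)}{2},
\end{align*}
and $\frac{(N-1)(2N-1)}{2}$ is an integer since $N-1$ is even. Hence $N \mid 3S$, which gives $3\ell S\equiv 0 \pmod N$ and completes the argument.

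\textbf{Expected obstacle.} The only delicate point is bookkeeping, not any real difficulty. One must check that $S$ is an integer, and not divide it by $3$ modulo $N$, since $3$ may share a factor with $N$. The argument is arranged so that only the integer $3S$ is ever reduced mod $N$. One must also confirm that the rewriting of $A_{L,N}$ via $\langle\cdot\rangle_N$, stated just before the lemma, is used with $c_k$ integral, so that the right-hand side really is $e^{2\pi i X/N}$.
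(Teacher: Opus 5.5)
Your proof is correct and takes essentially the same route as the paper: both replace $c_k=\langle(2k+1)\ell\rangle_N$ by $(2k+1)\ell$ and then compare the closed-form sums $\sum k^2$ and $\sum k(2k+1)$. The paper works with real exponents modulo $2$; its displayed equality $NA_{L,N}=\frac{L}{N}\sum_{k}k(2k+1)$ holds only modulo $2$. You instead state the integer congruence $3\ell S\equiv 0 \pmod N$ explicitly, which is the same check made slightly more carefully.
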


\begin{proof}
% The defining property of $\zeta_\inv$ is  (recall the proof of \cref{lem:involution}):
% \begin{align*}
%     \zeta_\inv^N=\prod_{k \in \bZ_N} \gamma(k)^{-1}.
% \end{align*}
% We can write $q=e^{-\pi i L/N}$, and hence
We compute
\begin{align*}
    %\zeta_\inv^N=
    \prod_{k \in \bZ_N} \gamma(k) = \exp\bigg(-\pi i \frac{L}{N} \sum_{k=1}^{N-1}k^2\bigg) = \exp\bigg(-\pi i \frac{L}{N} \frac{(N-1)N(2N-1)}{6}\bigg).
\end{align*}
On the other hand,
\begin{align*}
    N A_{L,N} = \frac{L}{N}\sum_{k=1}^{N-1}k(2k+1) =\frac{L}{N}\frac{(N-1)N(4N+1)}{6} \equiv -\frac{L}{N}\frac{(N-1)N(2N-1)}{6} \quad (\mbox{mod $2$}).
\end{align*}
Therefore the asserted relation holds. 
%Therefore, $\zeta_\inv^N=e^{-\pi i N A_{L,N}}$. Using the freedom observed in \cref{rem:zeta_inv_choice}, we can \emph{choose} the $N$-th root $\zeta_\inv:=e^{-\pi i A_{L,N}}$. This proves the first relation.

\end{proof}
Recall the function \eqref{eq:c_positive}. 
The following recovers \cite[Theorem 2.9 (2)]{Ish} at positive points:
\begin{prop}\label{prop:positive_normalization}
Let $\zeta_\inv:=e^{-\pi i A_{L,N}}$. 
Then
\begin{align*}
    c(Y) c(Y^{-1}) = \zeta_\inv
\end{align*}
for any $Y>0$. 
\end{prop}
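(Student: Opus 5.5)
The plan is a continuity argument followed by an explicit evaluation at $Y\to\infty$.

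\emph{Step 1: the ratio is an $N$-th root of unity.} From the definition,
\[
s(Y^{-1})=\bigl((1+Y)^{-1/N},\,(Y/(1+Y))^{1/N}\bigr)=s(Y)^{-1},
\]
with all roots positive. The proof of \cref{lem:involution} then gives
\[
\bigl(c(Y)c(Y^{-1})\bigr)^N=A(s(Y))^{-1}A(s(Y)^{-1})^{-1}=\zeta_\inv^N .
\]
Hence $Y\mapsto c(Y)c(Y^{-1})\zeta_\inv^{-1}$ is a continuous $\bmu_N$-valued function on the connected set $(0,\infty)$, and so it is constant. It therefore suffices to compute the limit of $c(Y)c(Y^{-1})$ as $Y\to\infty$.

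\emph{Step 2: an explicit branch of $L$.} Put $y=Y^{1/N}\ge 0$ and $\omega_j:=q^{2j-1}$. By \eqref{eq:A_function_y}, $A(s(Y))=\prod_{j=1}^N(1+\omega_j y)^{M-j}$. Since $q^N=1$ with $N$ odd, no $\omega_j$ equals $-1$. For $y\ge0$ the point $1+\omega_j y$ therefore moves along the ray from $1$ in the direction $\omega_j$, and never meets $(-\infty,0]$. So the principal logarithm is continuous along the path, and
\[
L(Y)=\sum_{j=1}^N (M-j)\,\mathrm{Log}(1+\omega_j y)
\]
is the branch with $L(0)=0$. As $Y\to\infty$ we have $\mathrm{Log}(1+\omega_j/y)\to0$, so $c(Y^{-1})\to1$. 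We also have $\mathrm{Log}(1+\omega_j y)=\log y+i\,\mathrm{Arg}(\omega_j)+o(1)$, with $\mathrm{Arg}$ taking values in $(-\pi,\pi)$. Since $\sum_{j=1}^N(M-j)=NM-N(N+1)/2=0$, the $\log y$ terms cancel. Consequently
\[
\lim_{Y\to\infty}c(Y)c(Y^{-1})=\exp\Bigl(-\frac{i}{N}\sum_{j=1}^N(M-j)\,\mathrm{Arg}(\omega_j)\Bigr).
\]

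\emph{Step 3: matching with $A_{L,N}$.} This is the main bookkeeping step. Write $j=k+1$ with $k=0,\dots,N-1$. Then $\omega_j=e^{-2\pi i\ell(2k+1)/N}$, so $\mathrm{Arg}(\omega_j)=-2\pi c_k/N$ with $c_k=\langle(2k+1)\ell\rangle_N$. Define $c_0$ by the same formula. The exponent becomes
\[
\frac{2\pi i}{N^2}\sum_{k=0}^{N-1}(M-1-k)c_k .
\]
We have $\gcd(\ell,N)=1$, because $L\equiv -a$ modulo $N$ and $\gcd(a,N)=1$. Hence the values $(2k+1)\ell$, $k=0,\dots,N-1$, run over a full residue system, and the symmetric residues satisfy $\sum_k c_k=0$. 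The exponent thus reduces to
\[
-\frac{2\pi i}{N^2}\sum_{k=1}^{N-1}k\,c_k=-\pi i A_{L,N},
\]
by the rewritten formula for $A_{L,N}$. This gives $e^{-\pi iA_{L,N}}=\zeta_\inv$, which combined with Step 1 proves the proposition.

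The care needed is in Step 2, namely justifying that the principal branch really is the continuous one with $L(0)=0$, and in the sign and orientation conventions of $\mathrm{Arg}$ in Step 3. The argument uses the exact identity $\mathrm{Arg}(\omega_j)=-2\pi c_k/N$, not a congruence modulo $2\pi$, since that ambiguity would spoil the $\bmu_N$ determination.
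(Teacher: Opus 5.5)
Your proof is correct and follows essentially the same route as the paper. Both arguments reduce, via the continuous $\bmu_N$-valued ratio $c(Y)c(Y^{-1})\zeta_\inv^{-1}$, to the limit $Y\to\infty$, evaluate $L(Y)$ with the principal logarithm so that the $\log y$ terms cancel by $\sum_j(M-j)=0$, and match the result with the rewritten formula $A_{L,N}=\frac{2}{N^2}\sum_k k c_k$. The only differences are cosmetic: you write $q^{2k+1}=e^{-2\pi i\ell(2k+1)/N}$ directly in terms of $c_k$ and use $\sum_k c_k=0$, where the paper passes through $q^{2j-1}=e^{2\pi i a(j-M)/N}$ and the identity $c_k=-\langle a(k-n)\rangle_N$; and you explicitly justify that the principal branch is the continuous one with $L(0)=0$, which the paper leaves implicit.
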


\begin{proof}
The left-hand side is continuous for $Y>0$. 
First note that $s(Y^{-1})=s(Y)^{-1}$ from the construction. Then we have (cf. the proof of \cref{lem:involution})
\begin{align*}
    (c(Y)c(Y^{-1}))^N = A(s(Y^{-1}))^{-1}A(s(Y))^{-1} = \zeta_\inv^N.
\end{align*}
Therefore $c(Y)c(Y^{-1})\zeta_\inv^{-1} \in \bmu_N$, which is a constant for $Y>0$ by continuity. 

It is immediate from the definition that $c(0)=1$. It suffices to show 
\begin{align}\label{eq:c_limit_infty}
    \lim_{Y \to +\infty} c(Y)=\zeta_\inv,
\end{align}
which concludes $c(Y) c(Y^{-1}) = \zeta_\inv$ for any $Y>0$. From the expression
\begin{align*}
    A(y)=\prod_{j=1}^N (1+q^{2j-1}y)^{M-j},
\end{align*}
we have
\begin{align*}
    L(Y)=\sum_{j=1}^N (M-j) \Log (1+ q^{2j-1}Y^{1/N})
\end{align*}
for $Y>0$, 
where $Y^{1/N}>0$ is the positive $N$-th root and $\Log$ denotes the principal branch of logarithm in $(-\pi,\pi)$. By $q^2=e^{2\pi i a/N}$, we have $q^{2j-1}=\exp(\frac{2\pi i a(j-M)}{N})$. Hence,
\begin{align*}
    \lim_{y \to +\infty}(\Log (1+ q^{2j-1}y) - \log y) = \frac{2\pi i}{N}\langle a(j-M) \rangle_N.
\end{align*}
Since $\sum_{j=1}^N (M-j)=0$, we obtain
\begin{align*}
    \lim_{y \to +\infty} L(Y) &= \lim_{y \to +\infty}\sum_{j=1}^N (M-j)(\Log (1+ q^{2j-1}y) - \log y + \log y) \\
    &= \sum_{j=1}^N (M-j)\frac{2\pi i}{N}\langle a(j-M) \rangle_N \\
    &= -\frac{2\pi i}{N}\sum_{k=-n}^n k\langle ak \rangle_N.
\end{align*}
We note
\begin{align}\label{eq:c_k_formula}
    c_k = \langle (2k+1)\ell \rangle_N = \langle 2\ell(k+n+1) \rangle_N = -\langle a(k+n+1) \rangle_N =  -\langle a(k-n) \rangle_N,
\end{align}
where we used $2(n+1)\equiv 1$ mod $N$, $2\ell=L \equiv -a$ mod $N$, and $N=2n+1$. Hence
\begin{align*}
    A_{L,N} &= -\frac{2}{N^2}\sum_{k=0}^{N-1} k\langle a(k-n) \rangle_N \\
    &= -\frac{2}{N^2}\sum_{k=-n}^{n} (k+n)\langle ak \rangle_N.
\end{align*}
Since $\gcd(a,N)=1$, $k \mapsto \langle ak \rangle_N$ gives a bijection on $\{-n,\dots,n\}$.
%the centered residues of $ak$ for $k\in \{-n,\dots,n\}$ exhaust $\{-n,\dots,n\}$.
Therefore $\sum_{k=-n}^n\langle ak \rangle_N=0$, and hence the second term vanishes. Combining the arguments, we obtain
\begin{align*}
    \lim_{Y \to +\infty} c(Y) &= \lim_{Y \to +\infty} \exp\bigg(-\frac{L(Y)}{N}\bigg) \\
    &= \exp\bigg( \frac{2\pi i}{N^2}\sum_{k=-n}^n k\langle ak \rangle_N\bigg) \\
    &= \exp(-\pi i A_{L,N}) =\zeta_\inv,
\end{align*}
which proves \eqref{eq:c_limit_infty}. 
\end{proof}

Recall the normalization constant $\zeta_A$ \cite[Section 4.4]{Ish} that ensures $\opA_v^3=\mathsf{1}$. The required relation is $\zeta_A^3=(N\zeta_-)^{-1}$, and we have a cube-root ambiguity for the choice of $\zeta_A$. Here,
\begin{align*}
    \zeta_-:=\sum_{k \in \bZ_N}\gamma(k)^{-1}
\end{align*}
is the constant appearing in the Fourier transformation formula \cite[Lemma 3.12]{Ish}. 

\begin{prop}\label{prop:zeta_value}
For an appropriate choice of $\zeta_A$, we have
\begin{align*}
    \zeta := \zeta_\inv (\zeta_A\zeta_-)^{-1} = e^{\frac{\pi i}{3}(B_{L,N}-A_{L,N})}.
\end{align*}
\end{prop}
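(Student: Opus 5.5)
We need $\zeta_\inv(\zeta_A\zeta_-)^{-1}=e^{\frac{\pi i}{3}(B_{L,N}-A_{L,N})}$, where $\zeta_\inv=e^{-\pi i A_{L,N}}$ is fixed by \cref{prop:positive_normalization} and $\zeta_A$ is only constrained by $\zeta_A^3=(N\zeta_-)^{-1}$. Because of the cube-root freedom in $\zeta_A$, it suffices to prove the cubed identity
\begin{align*}
    \zeta^3=\zeta_\inv^3\,\zeta_A^{-3}\zeta_-^{-3} = e^{-3\pi i A_{L,N}}\cdot N\zeta_-\cdot \zeta_-^{-3} = e^{-3\pi i A_{L,N}}\,N\,\zeta_-^{-2},
\end{align*}
and to check that it equals $e^{\pi i(B_{L,N}-A_{L,N})}$. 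Then $\zeta_A$ is chosen as the unique cube root of $(N\zeta_-)^{-1}$ for which $\zeta$ equals the asserted value. The statement therefore reduces to the Gauss-sum identity
\begin{align*}
    N\zeta_-^{-2} = e^{\pi i(B_{L,N}+2A_{L,N})}.
\end{align*}

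\textbf{Step 1: evaluate $\zeta_-$.} Since $q=e^{-\pi i L/N}$ and $\gamma(k)=q^{k^2}$, we get $\zeta_-=\sum_{k\in\bZ_N}e^{\pi i Lk^2/N}=\sum_k e^{2\pi i \ell k^2/N}$, a quadratic Gauss sum with $L=2\ell$. The classical evaluation gives $\zeta_-=\left(\frac{\ell}{N}\right)\varepsilon_N\sqrt N$ with $\varepsilon_N\in\{1,i\}$ according to $N\equiv 1,3 \bmod 4$. Consequently $N\zeta_-^{-2}=\varepsilon_N^{-2}=(-1)^{(N-1)/2}$, which is a sign independent of $\ell$.

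\textbf{Step 2: show $e^{\pi i(B_{L,N}+2A_{L,N})}=(-1)^{(N-1)/2}$.} This is equivalent to $B_{L,N}+2A_{L,N}\equiv n \pmod 2$, where $N=2n+1$. I would use the rewritten forms $A_{L,N}=\frac{2}{N^2}\sum_k k c_k$ and $B_{L,N}=\frac1N\sum_k k(\frac{2d_k}{N}-1)$. The constant part gives $-\frac{1}{N}\cdot\frac{N(N-1)}{2}=-n$. For the remaining terms:
\begin{itemize}
    \item Using \eqref{eq:residue_relation}, write $c_k=\{2(2k+1)\ell\}_N-\{(2k+1)\ell\}_N$.
    \item Express each residue as $x-N[x/N]$.
    \item Note that $d_k=\{2k\ell\}_N$ is already of this form.
\end{itemize}
The combination $\frac{2}{N^2}\sum_k k\,(2c_k+d_k)$ then splits into a polynomial part $\frac{2\ell}{N^2}\sum_k k(2(2k+1)+2k)$ plus integer-quotient parts. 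Each quotient part has the form $\frac{2}{N}\sum k\cdot(\text{integer})$, which is not obviously even. To control these parts I would use the reindexing trick from the proof of \cref{prop:positive_normalization}: the map $k\mapsto\langle ak\rangle_N$ is a bijection and the sum of residues vanishes. This reduces the weighted sums $\sum k\langle ak\rangle_N$ and $\sum k\{2ak\}_N$ to Dedekind-sum-type expressions $s(a,N)$. I would then invoke the classical congruence $12N s(a,N)\equiv a+a^{-1}\cdot(\ldots)\pmod{N}$, together with the parity statement for $12Ns(a,N)$ (Rademacher–Grosswald), to pin down the value mod $2$. This parity bookkeeping is the main obstacle.

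As a fallback for Step 2, I would check the identity directly at $a=1$, where the sums are explicit, and then show that both sides transform compatibly under $a\mapsto a'$ through their common dependence on Dedekind sums. Alternatively, one can cite \cite{Kas00}, where the same combination of $A_{L,N}$, $B_{L,N}$ and Gauss sums appears in the inversion-relation constant.

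Finally, taking the cube root consistently fixes $\zeta_A$. This shows that $\zeta$ is a $6N$-th root of unity, since $N(B_{L,N}-A_{L,N})\in\bZ$ after multiplying by $N$.
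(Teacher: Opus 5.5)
\textbf{Gap in Step 2.} Your reduction and Step 1 are exactly the paper's. You cube the identity and use the cube-root freedom in $\zeta_A$. You then evaluate $\zeta_-=\sum_k e^{2\pi i\ell k^2/N}$ as a quadratic Gauss sum to get $N\zeta_-^{-2}=(-1)^{(N-1)/2}$.

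The gap is Step 2, which is the only non-classical content of the proposition. You never establish $2A_{L,N}+B_{L,N}\equiv n \pmod 2$. You also never establish that $2A_{L,N}+B_{L,N}$ is an integer at all, which your own splitting leaves open, since the quotient parts $\frac{2}{N}\sum k\cdot(\text{integer})$ are not visibly integral. The Dedekind-sum route is only gestured at:
\begin{itemize}
\item The reduction of $2A_{L,N}+B_{L,N}$ to Dedekind sums is not carried out. It is not purely a Dedekind sum either: the shift in $c_k=\langle(2k+1)\ell\rangle_N$ produces half-range sums $\sum_{j=1}^n\{aj\}_N$ of Gauss-lemma type.
\item The congruence you quote is left with an ellipsis.
\item You yourself flag the parity bookkeeping as unresolved.
\item The ``check $a=1$ and transport'' fallback presupposes a transformation law you do not state.
\item Citing \cite{Kas00} without a specific statement does not close the argument.
\end{itemize}
As written, the proposition rests on an unproved lemma.

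\textbf{How the paper closes it.} The paper proves this step by an elementary reindexing argument (\cref{lem:2A+B}). Set $U(a):=\sum_{j=1}^{N-1} j\{aj\}_N$ and $V(a):=\sum_{j=1}^{n}\{aj\}_N$.
\begin{enumerate}
\item Using $c_k=\{4\ell(k+n+1)\}_N-\{2\ell(k+n+1)\}_N$ and the shift $j=\{k+n+1\}_N$, one gets $A_{L,N}=\frac{2}{N^2}(U(4\ell)-U(2\ell))+\frac{2}{N}(V(4\ell)-V(2\ell))$. One also has $B_{L,N}=\frac{2}{N^2}U(2\ell)-n$.
\item The key identity is $U(a)-2U(2a)=NV(2a)-N^2n$. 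It follows by substituting $j\mapsto\{2j\}_N$ in $U(a)$ and noting that $\{2j\}_N-2j$ equals $0$ for $j\le n$ and $-N$ for $j>n$.
\item Combining these gives
\[
2A_{L,N}+B_{L,N}=n+\frac{2}{N}\sum_{j=1}^{n}\bigl(\{4\ell j\}_N-2\{2\ell j\}_N\bigr).
\]
Each summand is divisible by $N$, so the correction term is an even integer.
\end{enumerate}
This is the missing idea; no Dedekind-sum reciprocity or Rademacher--Grosswald parity results are needed.

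Your closing remark about $\bmu_{6N}$ is not part of this proposition, and its justification is garbled. In the paper it is the separate \cref{cor:mu_6N}.
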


\begin{proof}
Writing $L=2\ell$, we obtain
\begin{align*}
    \zeta_-= \sum_{k \in \bZ_N} \exp\bigg(\frac{2\pi i\ell k^2}{N} \bigg) = \ve_N \bigg(\frac{\ell}{N}\bigg)_{\!\!J}\sqrt{N}
\end{align*}
by the usual quadratic Gauss summation\footnote{This approach avoids the use of the more general Gauss summation \cite[Theorem 3.11]{Ish}.},
where
\begin{align*}
    \ve_N:=\begin{cases}
        1 & \mbox{if $N\equiv 1$ mod $4$}, \\
        i & \mbox{if $N\equiv 3$ mod $4$},
    \end{cases}
\end{align*}
and $\big(\frac{\ell}{N}\big)_{\!J} \in \{\pm 1\}$ denotes the Jacobi symbol. In particular, we have
\begin{align*}
    \frac{\zeta_-^2}{N}= \ve_N^2 = (-1)^{(N-1)/2}.
\end{align*}
On the other hand, by \cref{lem:2A+B} below, we have
\begin{align*}
    e^{-\pi i (2A_{L,N}+B_{L,N})} = e^{\pi i \frac{N-1}{2}} = (-1)^{(N-1)/2}.
\end{align*}
Therefore
\begin{align*}
    \frac{\zeta_-^2}{N} = e^{-\pi i (2A_{L,N}+B_{L,N})}.
\end{align*}
By the normalization $\zeta_A^3=(N\zeta_-)^{-1}$, we have $(\zeta_A\zeta_-)^3 = \zeta_-^2/N$. We thus \emph{choose} a cube root $\zeta_A$ so that
\begin{align*}
    \zeta_A \zeta_- =e^{-\frac{\pi i}{3} (2A_{L,N}+B_{L,N})}.
\end{align*}
Then we obtain %\cite[Section 9.2.4]{Ish}
\begin{align*}
    \zeta = \zeta_\inv (\zeta_A\zeta_-)^{-1} = e^{\frac{\pi i}{3}(B_{L,N}-A_{L,N})}.
\end{align*}

\end{proof}

\begin{lem}\label{lem:2A+B}
We have $2A_{L,N}+ B_{L,N} \equiv \frac{N-1}{2}$ mod $2$.
%Write $N=2n+1$ and $L=2\ell$. For an integer $x$, let $\langle x\rangle_N \in \{-n,\dots,n\}$ be the ``centered'' residue of $x$ mod $N$. Then we have
% \begin{enumerate}
%     \item $A_{L,N}=\displaystyle\frac{2}{N^2}\sum_{k=1}^{N-1} kc_k$ and $B_{L,N}=\displaystyle\frac{1}{N}\sum_{k=1}^{N-1} k \bigg(\frac{2d_k}{N}-1 \bigg)$.
%     \item %$2A_{L,N}+ B_{L,N} = 2\nu_{L,N} - \frac{N-1}{2}$. 
%     %In particular, 
%     $2A_{L,N}+ B_{L,N} \equiv \frac{N-1}{2}$ mod $2$.
% \end{enumerate}
%In particular, each term in the right-hand side is $\pm 1$, and hence $2A_{L,N}+ B_{L,N} \equiv r$ mod $2$.
\end{lem}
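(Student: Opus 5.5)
The plan is to reduce the congruence to a divisibility statement about an integer sum, using the rewritten forms
\[
A_{L,N}=\tfrac{2}{N^2}\textstyle\sum_{k=1}^{N-1}kc_k,\qquad
B_{L,N}=\tfrac{1}{N}\textstyle\sum_{k=1}^{N-1}k\big(\tfrac{2d_k}{N}-1\big)
\]
obtained just before \cref{prop:A_gamma}.

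\textbf{Reduction.} Combining the two formulas and using $\sum_{k=1}^{N-1}k=N(N-1)/2$, I get
\begin{align*}
2A_{L,N}+B_{L,N}=\frac{2}{N^2}S-\frac{N-1}{2},\qquad S:=\sum_{k=1}^{N-1}k\,(2c_k+d_k)\in\bZ .
\end{align*}
Since $N$ is odd, $N-1$ is even, so $-\frac{N-1}{2}\equiv\frac{N-1}{2}$ mod $2$. It therefore suffices to show $\frac{2}{N^2}S\in 2\bZ$, i.e.\ that $N^2$ divides $S$.

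\textbf{Integer decomposition.} By definition, $c_k\equiv(2k+1)\ell$ and $d_k\equiv 2k\ell$ mod $N$. So I will write
\[
2c_k+d_k=(6k+2)\ell-Nm_k
\]
with explicit integers $m_k$. Concretely, $m_k=2r_k+f_k$, where $r_k$ is the integer with $(2k+1)\ell=c_k+Nr_k$ (a rounding of $(2k+1)\ell/N$) and $f_k=\lfloor 2k\ell/N\rfloor$. The main term is
\[
\ell\sum_{k=1}^{N-1} k(6k+2)=\ell\big((N-1)N(2N-1)+N(N-1)\big)=2\ell N^2(N-1),
\]
which is divisible by $N^2$. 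Hence the claim reduces to
\[
\sum_{k=1}^{N-1}k\,m_k\equiv 0 \pmod N .
\]

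\textbf{Main obstacle: the congruence for $\sum_k k\,m_k$.} This is the step I expect to be hardest. I would first try the reflection symmetries of the residues. Since $(2(N-1-k)+1)\ell\equiv-(2k+1)\ell$, we have $c_{N-1-k}=-c_k$. Since $2(N-k)\ell\equiv-2k\ell$, we have $d_{N-k}=N-d_k$ whenever $d_k\neq0$, and $d_k\neq0$ for $1\le k\le N-1$ because $\gcd(2\ell,N)=1$. These relations express $m_{N-1-k}$ and $m_{N-k}$ through $m_k$ up to explicit linear terms in $k$. Averaging $\sum_k k\,m_k$ against its reflected versions should then leave only sums of the form $\sum_k c_k$, $\sum_k d_k$ and $\sum_k k$.

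Those remaining sums can be evaluated using the bijectivity of $k\mapsto\langle ak\rangle_N$ and $k\mapsto\{2k\ell\}_N$, exactly as in the proof of \cref{prop:positive_normalization}; in particular $\sum_k c_k$ and $\sum_k d_k$ are computable in closed form ($0$-type and $N(N-1)/2$-type sums). If the parity bookkeeping through the reflections becomes messy, the fallback is to compute $S$ mod $N^2$ directly. For that, I would write $c_k$ via \eqref{eq:residue_relation} as $\{2(2k+1)\ell\}_N-\{(2k+1)\ell\}_N$, so that everything is expressed through least residues $\{j\ell\}_N$ summed with linear weights. Then I would reindex each such sum via $j\mapsto j\ell$ mod $N$, which permutes residues, and the weighted sums reduce to standard Dedekind-type sums whose values modulo $N^2$ can be tracked.
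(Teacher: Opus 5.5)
\textbf{There is a genuine gap: the step that carries all the content of the lemma is not proved.}

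Your reduction is correct. We have $2A_{L,N}+B_{L,N}=\frac{2}{N^2}S-\frac{N-1}{2}$, the main term $2\ell N^2(N-1)$ is right, and the lemma is equivalent to $\sum_{k=1}^{N-1}k\,m_k\equiv 0\pmod N$, i.e.\ $N^2\mid S$. But that congruence is the whole lemma, and your proposal leaves it open.

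The reflection strategy cannot work, for a structural reason. The involutions $k\mapsto N-1-k$ and $k\mapsto N-k$ negate both the weight $k$ (up to a constant) and the residue. So the weighted sum reappears with coefficient $+1$ and cancels. Concretely:
\begin{itemize}
\item $r_{N-1-k}=2\ell-r_k$, and substituting into $\sum_{k=0}^{N-1}k\,r_k$ returns $\sum_k k\,r_k$ itself. The leftover terms only give the unweighted identity $\sum_{k=0}^{N-1}r_k=\ell N$.
\item Likewise $f_{N-k}=2\ell-1-f_k$ only yields $\sum_{k=1}^{N-1}f_k=(2\ell-1)(N-1)/2$.
\item Moreover, $r_k$ and $f_k$ reflect under different involutions, so there is no single reflection formula for $m_k$.
\end{itemize}
No averaging of this kind determines $\sum_k k\,m_k$ modulo $N$.

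The fallback is not a proof either. Reindexing by $j\mapsto j\ell$ just turns one weighted sum $\sum_j j\{aj\}_N$ into another of the same type. These are essentially Dedekind sums: $\sum_{j=1}^{N-1}j\{aj\}_N=N^2 s(a,N)+N^2(N-1)/4$. Their residues modulo $N^2$ cannot be ``tracked'' individually by elementary means.

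\textbf{The missing idea} is a \emph{doubling} substitution, which relates the $2\ell$-sums to the $4\ell$-sums. Set $U(a)=\sum_{j=1}^{N-1}j\{aj\}_N$ and $V(a)=\sum_{j=1}^{n}\{aj\}_N$, with $N=2n+1$.
\begin{itemize}
\item Reindexing $j\mapsto\{2j\}_N$ gives $U(a)=\sum_j\{2j\}_N\{2aj\}_N$.
\item Hence $U(a)-2U(2a)=-N\sum_{j=n+1}^{N-1}\{2aj\}_N=NV(2a)-N^2n$.
\item Write $c_k=\{4\ell(k+n+1)\}_N-\{2\ell(k+n+1)\}_N$ and shift $k$. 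Then $\sum_k k\,c_k=U(4\ell)-U(2\ell)+N\bigl(V(4\ell)-V(2\ell)\bigr)$, and $\sum_k k\,d_k=U(2\ell)$.
\end{itemize}
Combining these,
\[
S=2U(4\ell)-U(2\ell)+2N\bigl(V(4\ell)-V(2\ell)\bigr)=N^2n+N\sum_{j=1}^{n}\bigl(\{4\ell j\}_N-2\{2\ell j\}_N\bigr).
\]
Each summand is $0$ or $-N$, so $N^2\mid S$.

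This is the paper's argument. Your fallback points in this direction, but the doubling identity, which is where the Dedekind-type sums cancel, is exactly the step you need to supply.
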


\begin{proof}
%We first note $c_k = \langle 2\ell(k+n+1)\rangle_N$, since $2(n+1)\equiv 1$ mod $N$.
For any integer $a$ satisfying $\gcd(a,N)=1$, the map $j \mapsto \{aj\}_N$ gives a bijection on $\bZ_N$. We will use this type of ``change of coordinates'' below.
For example, 
\begin{align}\label{eq:V-total}
    \sum_{j=0}^{N-1} \{aj\}_N = \sum_{j=0}^{N-1} j =Nn.
\end{align}
% We claim the summation formula
% \begin{align}
%     2\sum_{k=1}^{N-1} kc_k + \sum_{k=1}^{N-1} kd_k = N^2 \nu_{L,N},
% \end{align}
% where $\nu_{L,N}:=\#\{ 0 \leq k \leq N-1 \mid c_k <0\}$. 
Consider
\begin{align*}
    U(a):=\sum_{j=1}^{N-1} j \{aj\}_N, \quad V(a):=\sum_{j=1}^n \{aj\}_N
\end{align*}
for any $a$ satisfying $\gcd(a,N)=1$. 
Using $U(a)$ and $V(a)$,  we can write Kashaev's constants as 
\begin{align}
    A_{L,N} &=\frac{2}{N^2}(U(4\ell)-U(2\ell)) + \frac{2}{N}(V(4\ell)-V(2\ell)), \label{eq:A-formula}\\
    B_{L,N} &= \frac{2}{N^2}U(2\ell) - n. \label{eq:B-formula}
\end{align}
Indeed, the formula for $B_{L,N}$ is straightforward. To prove the formula for $A_{L,N}$, first note $c_k=\langle 2\ell(k+n+1)\rangle_N=\{4\ell(k+n+1)\}_N - \{2\ell(k+n+1)\}_N$ by \eqref{eq:c_k_formula} and \eqref{eq:residue_relation}.
By the shift of variable $j=\{k+n+1\}_N$ in $\bZ_N$, we get
\begin{align*}
    \sum_{k=0}^{N-1}k\{a(k+n+1)\}_N &= \sum_{j=0}^n (j-n-1+N)\{aj\}_N + \sum_{j=n+1}^{N-1}(j-n-1)\{aj\}_N \\
    &= U(a) -n(n+1)N + NV(a)
\end{align*}
for any $a$ such that $\gcd(a,N)=1$. 
Hence we obtain
\begin{align*}
    A_{L,N} 
    &= \frac{2}{N^2} \sum_{k=0}^{N-1} k (\{4\ell(k+n+1)\}_N - \{2\ell(k+n+1)\}_N) \\
    &= \frac{2}{N^2}(U(4\ell)-U(2\ell)) + \frac{2}{N}(V(4\ell)-V(2\ell)).
\end{align*}
This proves \eqref{eq:A-formula}.

We claim the following relation between $U$ and $V$:
\begin{align}\label{eq:U-V-relation}
    U(a)-2U(2a) = NV(2a) - N^2n
\end{align}
for $\gcd(a,N)=1$. Indeed, by the change of variable $j \mapsto \{2j\}_N$, we obtain $U(a)=\sum_{j=1}^{N-1} \{2j\}_N\{2aj\}_N$, and hence
\begin{align*}
    U(a) - 2U(2a) &= \sum_{j=1}^{N-1}(\{2j\}_N -2j)\{2aj\}_N \\
    &= -N \sum_{j=n+1}^{N-1}\{2aj\}_N \\
    &= -N \bigg( \sum_{j=1}^{N-1}\{2aj\}_N - \sum_{j=1}^n \{2aj\}_N \bigg) = -N^2 n + NV(2a),
\end{align*}
where we used \eqref{eq:V-total}.

By combining \eqref{eq:A-formula}, \eqref{eq:B-formula} and \eqref{eq:U-V-relation}, we obtain
\begin{align*}
    2A_{L,N} + B_{L,N} &= \frac{2}{N^2}(2U(4\ell)-U(2\ell)) + \frac{4}{N}(V(4\ell)-V(2\ell)) -n \\
    &= n + \frac{2V(4\ell)-4V(2\ell)}{N} \\
    &= n + \frac{2}{N}\sum_{j=1}^n(\{4\ell j\}_N -2\{2\ell j\}_N) \\
    &\equiv n \mod 2.
\end{align*}
Here, in the last line, we have used $\{4\ell j\}_N -2\{2\ell j\}_N \equiv 0$ mod $N$. This completes the proof.
\end{proof}

\begin{cor}\label{cor:mu_6N}
We have $\zeta \in \bmu_{6N}$.
\end{cor}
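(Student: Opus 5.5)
We need to show $\zeta=e^{\frac{\pi i}{3}(B_{L,N}-A_{L,N})}$ is a $6N$-th root of unity. It suffices to show that $N(B_{L,N}-A_{L,N})$ is an even integer, because then
\[
\zeta^{6N}=e^{2\pi i N(B_{L,N}-A_{L,N})}=1 .
\]
We work with the formulas \eqref{eq:A-formula} and \eqref{eq:B-formula}, together with the relation \eqref{eq:U-V-relation}.

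\textbf{Step 1 (reduce to $N A_{L,N}$).} By \cref{lem:2A+B}, $B_{L,N}=-2A_{L,N}+n+2m$ for some integer $m$. Hence
\[
N(B_{L,N}-A_{L,N})=-3NA_{L,N}+N(n+2m).
\]
So everything reduces to controlling $NA_{L,N}$.

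\textbf{Step 2 (integrality of $NA_{L,N}$).} From the definition,
\[
NA_{L,N}=\frac{2}{N}\sum_{k=1}^{N-1} k c_k, \qquad c_k=\langle (2k+1)\ell\rangle_N\equiv (2k+1)\ell \pmod N .
\]
Therefore
\[
\sum_{k=1}^{N-1} kc_k\equiv \ell\sum_{k=1}^{N-1}k(2k+1)=\ell\,\frac{(N-1)N(4N+1)}{6}\pmod N .
\]
Since $N$ is odd and $\gcd(N,6)$ divides $3$, the right-hand side is divisible by $N$ unless $3\mid N$. In that case one checks separately that $(N-1)(4N+1)/6\cdot N$ is still divisible by $N$: the factor $(N-1)(4N+1)$ is even, and $N/3$ times it over $2$ remains a multiple of $N$ only after using $4N+1\equiv 1$ and $N-1\equiv -1 \pmod 3$. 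I expect this divisibility-by-$3$ bookkeeping to be the main obstacle. If it fails, I would instead use the congruence from \cref{prop:A_gamma},
\[
NA_{L,N}\equiv -\frac{L}{N}\cdot\frac{(N-1)N(2N-1)}{6}\pmod 2,
\]
which shows that $NA_{L,N}$ is rational with denominator dividing $3$.

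\textbf{Step 3 (conclude).} Once $NA_{L,N}\in\bZ$, the integer $N(B_{L,N}-A_{L,N})=-3NA_{L,N}+N(n+2m)$ needs to be even, which amounts to a parity check of
\[
NA_{L,N}+Nn\equiv NA_{L,N}+n \pmod 2 .
\]
If parity is problematic, note that it is enough to have $N(B_{L,N}-A_{L,N})\in\bZ$ combined with $\tfrac{1}{3}N(B_{L,N}-A_{L,N})$ having the right denominator. Alternatively, bypass parity entirely via \cref{prop:zeta_value}: there $\zeta=\zeta_\inv(\zeta_A\zeta_-)^{-1}$, with $\zeta_\inv^N=\prod_k\gamma(k)^{-1}\in\bmu_N$ (since $\gamma(k)=q^{k^2}\in\bmu_N$) and $(\zeta_A\zeta_-)^3=\zeta_-^2/N=\pm1$. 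Then
\[
\zeta^{6N}=\zeta_\inv^{6N}\,(\zeta_A\zeta_-)^{-6N}=1\cdot\bigl((\pm1)^{-2}\bigr)^{N}=1 .
\]
I would present this last argument as the main proof, since it uses only \cref{lem:involution} and \cref{prop:zeta_value} and avoids the fragile arithmetic of Steps 2 and 3.
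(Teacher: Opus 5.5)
The argument you designate as the main proof has a gap at the one step that carries the content. From $\zeta_\inv^N=\prod_k\gamma(k)^{-1}\in\bmu_N$ you can only conclude $\zeta_\inv^{N^2}=1$. The claim $\zeta_\inv^{6N}=(\zeta_\inv^N)^6=1$ additionally requires $\zeta_\inv^N\in\bmu_6\cap\bmu_N=\bmu_{\gcd(6,N)}$, i.e.\ $\prod_k\gamma(k)\in\bmu_3$, and this does not follow from membership in $\bmu_N$, since $\bmu_N\not\subset\bmu_6$ for $N\geq 5$. Because $\zeta_\inv=e^{-\pi i A_{L,N}}$, the identity $\zeta_\inv^{6N}=1$ is literally the statement $3NA_{L,N}\in\bZ$. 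That is exactly the ``divisibility-by-$3$ bookkeeping'' you hoped to bypass, and it is the real content of the paper's proof. It is easily supplied: $\bigl(\prod_{k\in\bZ_N}\gamma(k)\bigr)^3=q^{3\sum_{k=0}^{N-1}k^2}=q^{N(N-1)(2N-1)/2}=1$, because $(N-1)(2N-1)$ is even and $q^N=1$. With that line added, your multiplicative argument is correct. It is essentially the paper's proof in exponentiated form: $(\zeta_A\zeta_-)^6=1$ corresponds to $2A_{L,N}+B_{L,N}\in\bZ$, and $\zeta_\inv^{6N}=1$ corresponds to $3NA_{L,N}\in\bZ$.

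Steps 1--3 go wrong for a separate reason: your opening reduction is off. Since $\zeta^{6N}=e^{2\pi i N(B_{L,N}-A_{L,N})}$, integrality of $N(B_{L,N}-A_{L,N})$ suffices, and evenness is in fact false. Take $N=3$ and $q^2=e^{2\pi i/3}$, so $a=1$, $L=2$, $\ell=1$. Then $c_1=0$, $c_2=-1$, $d_1=2$, $d_2=1$, hence $A_{L,N}=-4/9$, $B_{L,N}=-1/9$ and $N(B_{L,N}-A_{L,N})=1$. So $\zeta=e^{\pi i/9}$, which lies in $\bmu_{18}$ but not in $\bmu_9$. The same example shows the Step 2 target $NA_{L,N}\in\bZ$ is false (here $NA_{L,N}=-4/3$). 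More generally it fails whenever $3\mid N$, since then $3\nmid \ell(N-1)(4N+1)$. However, your own fallback, $NA_{L,N}\in\frac13\bZ$ from the mod-$2$ congruence, combined with Step 1 already gives $N(B_{L,N}-A_{L,N})=-3NA_{L,N}+N(n+2m)\in\bZ$. That is exactly the paper's argument; the paper obtains $3NA_{L,N}\equiv \ell(N-1)(4N+1)\equiv 0$ mod $\bZ$ directly from $c_k\equiv(2k+1)\ell$ mod $N$. You had a complete proof in hand and discarded it only because of the spurious parity requirement.
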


\begin{proof}
It suffices to prove $N(B_{L,N}-A_{L,N}) \in \bZ$. Since $2A_{L,N} +B_{L,N} \in \bZ$, it suffices to show $3N A_{L,N} \in \bZ$. Since $\langle x \rangle_N - x \in N\bZ$, we obtain
\begin{align*}
    3N A_{L,N} &= \frac{6}{N} \sum_{k=1}^{N-1} k\langle (2k+1)\ell\rangle_N \\
     &\equiv \frac{6}{N} \ell\sum_{k=1}^{N-1} k(2k+1) \mod \bZ\\
     &\equiv (N-1)(4N+1)\ell \equiv 0 \mod \bZ.
\end{align*}
Therefore $3N A_{L,N} \in \bZ$.
\end{proof}

\end{document}